%------------------------------------------------------------------------------
% Beginning of journal.tex
%------------------------------------------------------------------------------
%
% AMS-LaTeX version 2 sample file for journals, based on amsart.cls.
%
%        ***     DO NOT USE THIS FILE AS A STARTER.      ***
%        ***  USE THE JOURNAL-SPECIFIC *.TEMPLATE FILE.  ***
%
% Replace amsart by the documentclass for the target journal, e.g., tran-l.
%
\documentclass{amsart}
\usepackage{}
\usepackage{amssymb}
\usepackage{amsmath}
\usepackage{cases}
\usepackage{mathrsfs}
\usepackage{amsfonts}
\usepackage{cite}

\newtheorem{theorem}{Theorem}[section]
\newtheorem{lemma}[theorem]{Lemma}

\theoremstyle{definition}

\newtheorem{corollary}[theorem]{Corollary}
\theoremstyle{remark}

\numberwithin{equation}{section}

%    Absolute value notation

%    Blank box placeholder for figures (to avoid requiring any
%    particular graphics capabilities for printing this document).

\begin{document}
\title{Representations for generalized Drazin inverse of operator matrices over a Banach space}

%    Information for first author
\author{Daochang Zhang}
%    Address of record for the research reported here
\address{School of Mathematics, Jilin University, Changchun 130012, China }
%    Current address
%\curraddr{}
\email{daochangzhang@gmail.com}
%    \thanks will become a 1st page footnote.
\thanks{Supported by NSFC (No. 11371165).}
%    General info
\subjclass[2010]{46H05;47A05;15A09.}

\date{December 6, 2014.}

%\dedicatory{This paper is dedicated to our advisors.}
\keywords{generalized Drazin inverse, operator matrix, Banach space}

\begin{abstract}
 In this paper we give expressions for the generalized Drazin inverse of a (2,2,0) operator matrix and a $2\times2$ operator matrix under certain circumstances, which generalizes and unifies several results in the literature.
\end{abstract}

\maketitle
\section{introduction}

The concept of the generalized Drazin inverse (GD-inverse) in a Banach algebra was introduced by Koliha \cite{Koliha1996}. Let $\mathcal {B}$ be a complex unital Banach algebra. An element $a$ of $\mathcal {B}$ is generalized Drazin invertible in case there is an element $b\in\mathcal {B}$ satisfying
\begin{equation*}\label{defingdrazin}
  ab=ba,~~~ bab=b,~~~\text{and}~~~ a-a^{2}b~~~\text{is quasinilpotent}.
\end{equation*}
Such $b$, if it exists, is unique; it is called a generalized Drazin inverse of $a$, and will be
denoted by $a^d$. Then the spectral idempotent $a^\pi$ of $a$ corresponding to {0} is given by $a^\pi=1-aa^d$.

The  GD-inverse was extensively investigated for  matrices over complex Banach algebras and  matrices of
bounded linear operators over complex Banach spaces.
  The GD-inverse of the operator matrix has various applications in singular differential equations and singular difference equations, Markov chains and iterative methods and so on (see
\cite{Ben2006,Campbell1980I,Campbell1980II,Campbell1983,Castro2006,Castro2002,Djordjev2001,Du2005,Wei2005,Wei2006(1),Wei2006(2)}).

The generalized Drazin inverse is a generalization of Drazin inverses and group inverses.
The study on representations for the Drazin inverse of block matrices essentially originated from finding the general
expressions for the solutions to singular systems of differential equations \cite{Campbell1983,Campbell1991,Campbell1976}.
Until now, there have been many formulae for the Drazin inverse of general $2\times2$ block matrices under some restrictive assumptions
(see \cite{Cvetkov2008,Djordjev2001,Dopazo2010,guophd,Guo2010,Hartwig1977,Hartwig2006,Liu2012,Meyer1997}).
%\begin{enumerate}
%\item[(1)] $BC =0, BD = 0$ and $DC = 0$ (see \cite{Djordjev 2001});
%\item[(2)] $BC =0, DC = 0$ (or $BD = 0$) and $D$ is nilpotent (see \cite{Hartwig2006});
%\item[(3)] $BC = 0$ and $DC = 0$ (see \cite{Cvetkov2008});
%\item[(4)] $BC =0, BDC = 0$ and $BD^2 = 0$ (see \cite{Dopazo2010});
%\item[(5)] $BD^iC=0$, for $i=0,1,...,n-1$ (see\cite{Guo2010});
%\item[(6)] $ABC=0$, and $BD^iC=0$ for $i=1,2,...,n-1$ (see\cite{guophd}).
%\end{enumerate}

Some results of the Drazin inverse have been developed in the GD-inverse of operator matrices over Banach spaces (see
\cite{Castro2009,CvetkovAS2011,Deng2010,Djordjev2001,guophd}).
Assume that both $X$ and $Y$ are complex Banach spaces. Denote by $\mathcal {B}(X,Y)$ the set of all bounded linear operators from $X$ to $Y$, and write $\mathcal {B}(X,X)=\mathcal {B}(X)$.
Let an operator matrix $M=\begin{pmatrix}
       A&B\\
       C&D
      \end{pmatrix},$
where $A\in\mathcal {B}(X),D\in\mathcal {B}(Y),B\in\mathcal {B}(Y,X),C\in\mathcal {B}(X,Y)$.

Castro-Gonz\'{a}lez \cite{Castro2009} derived explicit expressions of the GD-inverse of $M$ under certain conditions, which extended some results of \cite{Deng2010,Djordjev2001}.
Cvetkovi\'{c} \cite{CvetkovAS2011} also extended some results of \cite{Deng2010,Djordjev2001}.
Recently, Mosi\'{c} \cite{Mosic2014} gave the new formulae for the GD-inverse of $2\times 2$ matrices in a Banach algebra.

%\begin{enumerate}
%\item[(1)] $bc =0, bd = 0$ and $dc = 0$ (see \cite{Djordjev 2001});
%\item[(2)] $bc =0, bd = 0$ (see \cite{Deng2010});
%\item[(3)] $bc =0, dc = 0$ (see \cite{Deng2010});
%\item[(4)]$bca=0,bd=0,dc=0$ (see \cite{Castro2009});
%\item[(5)]$abc=0,bd=0,dc=0$ (see \cite{Cvetkov AS2010});
%\item[(6)]$bca=0,bd=0,$ and $bc$ is nilpotent (see \cite{Castro2009});
%\item[(7)]$abc=0,dc=0,$ and $bc$ is nilpotent (see \cite{Cvetkov AS2010});
%\item[(8)]$bca=0,dc=0,$ and $d$ is nilpotent (see \cite{Castro2009});
%\item[(9)]$abc=0,dc=0,$ and $d$ is nilpotent (see \cite{Cvetkov AS2010});
%\item[(10)] $bd^d=0$, $bd^ic=0,$ for $i=0,1,...,n-1$ (see\cite{guophd});
%\item[(11)] $d^dc=0$, $bd^ic=0,$ for $i=0,1,...,n-1$ (see\cite{guophd});
%\item[(12)] $bd^d=0$, $abc=0$, and $bd^ic=0$ for $i=1,2,...,n-1$ (see\cite{guophd}).
%\item[(13)] $d^dc=0$, $abc=0$, and $bd^ic=0$ for $i=1,2,...,n-1$ (see\cite{guophd}).
%\end{enumerate}

In this paper, we derive new formulae for the GD-inverse of a (2,2,0) operator matrix $N$ under certain circumstances.
Furthermore,
we apply $N^d$ to give representations of $M^d$ under weaker restrictions, which generalizes and unifies several results of \cite{Castro2009,CvetkovAS2011,Deng2010,Djordjev2001,guophd,Mosic2014}.

Since $(a^d)^n=(a^n)^d$ for any $a\in \mathcal {B}$
we adopt the convention that  $a^{dn}=(a^d)^n$ and $a^0=1$
 and $\sum_{i=0}^k\ast=0$ in case $k<0$.
Moreover, we define
$\begin{pmatrix}
  a&b\\c&d
\end{pmatrix}^\top=\begin{pmatrix}
  a&c\\b&d
\end{pmatrix}$ for $a,b,c,d\in \mathcal {B}.$
%For notational convenience, we write $A^e$ for $AA^d$.

\section{Generalized Drazin inverse of a (2,2,0) operator matrix}
Let $\mathcal {B}$ be a complex unital Banach algebra.
An element $a\in\mathcal {B}$ is called quasinilpotent, if $\lim_{n\rightarrow\infty}\|a^n\|^{\frac{1}{n}}=0.$
Let $\mathcal {M}_2(\mathcal {B})$ be the $2\times2$ matrix algebra over $\mathcal {B}$. Given an idempotent $e$ in $\mathcal {B}$, we consider the set
$\mathcal {M}_2(\mathcal{B},e)=\begin{pmatrix}
  e\mathcal{B}e&e\mathcal{B}(1-e)\\
  (1-e)\mathcal{B}e&(1-e)\mathcal{B}(1-e)
\end{pmatrix}\subset \mathcal {M}_2(\mathcal{B})$.
Then $\mathcal {M}_2(\mathcal {B},e)$ is a unital Banach algebra with respect to the norm
$$\left\|\begin{pmatrix}
  a_{11}&a_{12}\\a_{21}&a_{22}
\end{pmatrix}\right\|=\|a_{11}+a_{12}+a_{21}+a_{22}\|.$$
\begin{lemma}\label{lempierce}
Let $e$ be an idempotent of $\mathcal {B}$. For any $a\in\mathcal{B}$ let
$$\sigma(a)
=\begin{pmatrix}
  eae&ea(1-e)\\
  (1-e)ae&(1-e)a(1-e)
\end{pmatrix}
\in\mathcal {M}_2(\mathcal{B},e).$$
Then the mapping $\sigma$ is an isometric Banach algebra isomorphism from
$\mathcal{B}$ to $\mathcal {M}_2(\mathcal{B},e)$ such that
\begin{enumerate}
\item
$(\sigma(a))^d=\sigma(a^d)$;\\
\item if $(\sigma(a))^d=\begin{pmatrix}
     \alpha & \beta \\
     \gamma & \delta
   \end{pmatrix}$, then $a^{d}=\alpha+\beta+\gamma+\delta$.
\end{enumerate}
\end{lemma}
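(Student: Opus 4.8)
The plan is to first verify that $\sigma$ has the purely algebraic and metric properties claimed, and then to transport the three defining conditions of the generalized Drazin inverse across $\sigma$. The starting point is the Pierce decomposition $a = eae + ea(1-e) + (1-e)ae + (1-e)a(1-e)$, which follows by writing $a = (e+(1-e))\,a\,(e+(1-e))$ and expanding. Since the four summands are exactly the four entries of $\sigma(a)$, the norm on $\mathcal{M}_2(\mathcal{B},e)$ gives $\|\sigma(a)\| = \|eae + ea(1-e) + (1-e)ae + (1-e)a(1-e)\| = \|a\|$, so $\sigma$ is isometric and in particular injective. For surjectivity I would, given a matrix with entries $a_{11}\in e\mathcal{B}e$, $a_{12}\in e\mathcal{B}(1-e)$, $a_{21}\in(1-e)\mathcal{B}e$, $a_{22}\in(1-e)\mathcal{B}(1-e)$, set $a = a_{11}+a_{12}+a_{21}+a_{22}$ and check, using $e^2=e$ and $e(1-e)=(1-e)e=0$, that the four Pierce components of $a$ recover precisely the given entries. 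Finally $\sigma$ respects multiplication, $\sigma(ab)=\sigma(a)\sigma(b)$, by a direct computation from $e^2=e$, $(1-e)^2=1-e$, $e(1-e)=(1-e)e=0$, while $\sigma(1)=\begin{pmatrix} e & 0 \\ 0 & 1-e\end{pmatrix}$ is the unit of $\mathcal{M}_2(\mathcal{B},e)$. Thus $\sigma$ is an isometric unital Banach algebra isomorphism whose inverse sends a matrix to the sum of its four entries.

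For part (1) I would transfer the three conditions characterizing $a^d$. Writing $b = a^d$, the homomorphism property gives $\sigma(a)\sigma(b)=\sigma(ab)=\sigma(ba)=\sigma(b)\sigma(a)$ and $\sigma(b)\sigma(a)\sigma(b)=\sigma(bab)=\sigma(b)$, so the commuting and outer-inverse identities hold for the pair $(\sigma(a),\sigma(b))$. The only condition requiring care is quasinilpotency of $\sigma(a)-\sigma(a)^2\sigma(b)=\sigma(a-a^2b)$. Here the isometry is decisive: because $\sigma$ is an isometric homomorphism, $\|\sigma(x)^n\|^{1/n}=\|\sigma(x^n)\|^{1/n}=\|x^n\|^{1/n}$ for every $x$ and every $n$, so $\sigma(x)$ is quasinilpotent in $\mathcal{M}_2(\mathcal{B},e)$ exactly when $x$ is quasinilpotent in $\mathcal{B}$. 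Applying this to $x = a-a^2b$ shows $\sigma(a)-\sigma(a)^2\sigma(b)$ is quasinilpotent. Hence $\sigma(a^d)$ satisfies all three defining relations for the generalized Drazin inverse of $\sigma(a)$, and by the uniqueness of the generalized Drazin inverse recalled in the introduction, $(\sigma(a))^d=\sigma(a^d)$.

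Part (2) then follows immediately: if $(\sigma(a))^d=\begin{pmatrix}\alpha & \beta \\ \gamma & \delta\end{pmatrix}$, then by part (1) this matrix equals $\sigma(a^d)$, whose four entries are precisely the Pierce components of $a^d$. Summing them and invoking the Pierce decomposition of $a^d$ (equivalently, applying $\sigma^{-1}$) yields $a^d = \alpha+\beta+\gamma+\delta$. The one genuinely substantive step is the quasinilpotency transfer; everything else is routine verification of the isomorphism axioms. I expect that transfer—specifically the observation that an isometric algebra homomorphism preserves the spectral radius $\lim_{n\to\infty}\|x^n\|^{1/n}$ and hence quasinilpotency—to be the crux of the argument.
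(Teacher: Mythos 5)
Your proof is correct and follows essentially the same route as the paper, whose own proof merely cites \cite[Lemma 2.1]{Castro2004} for the fact that $\sigma$ is an isometric Banach algebra isomorphism and declares the rest ``obvious''; your verification of the isomorphism axioms (Pierce decomposition, isometry, multiplicativity) and the transfer of the three defining conditions---with the spectral-radius identity $\|\sigma(x)^n\|^{1/n}=\|x^n\|^{1/n}$ handling quasinilpotency---is precisely the content the paper outsources. One small point: in part (2) the hypothesis only gives generalized Drazin invertibility of $\sigma(a)$, not of $a$, so the clean way to conclude is the one in your parenthetical remark, namely to run the part-(1) transfer argument for the inverse map $\sigma^{-1}$ (itself an isometric Banach algebra isomorphism), which simultaneously yields the existence of $a^d$ and the formula $a^d=\alpha+\beta+\gamma+\delta$.
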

\begin{proof}
By \cite [Lemma 2.1]{Castro2004} we have the mapping $\sigma$ is an isometric Banach algebra isomorphism from
$\mathcal{B}$ to $\mathcal {M}_2(\mathcal{B},e)$. The rest of the proof is obvious.
\end{proof}

\begin{lemma}\label{triangle}(\cite{Djordjev2001})
  Let $x=\begin{pmatrix}
                   a & b \\
                   0 & d
                 \end{pmatrix}$
     and let
     $y=\begin{pmatrix}
                   d & 0 \\
                   b & a
                 \end{pmatrix}$~~~for $a,b,d\in\mathcal {B}$. Then
  $$x^{d}=\begin{pmatrix}
                   a^{d} & X \\
                   0 & d^{d}
                 \end{pmatrix}, \quad
  y^{d}=\begin{pmatrix}
                   d^{d} & 0 \\
                   X & a^{d}
                 \end{pmatrix},$$
   where
\begin{equation*}
   X=a^{\pi}\sum_{i=0}^{\infty}a^{i}bd^{d(i+2)}+\sum_{i=0}^{\infty}a^{d(i+2)}bd^{i}d^{\pi}-a^{d}bd^{d}.
\end{equation*}
\end{lemma}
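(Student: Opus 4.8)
The plan is to exhibit the matrix $P=\begin{pmatrix} a^{d} & X \\ 0 & d^{d}\end{pmatrix}$ as a candidate and to verify directly that it satisfies the three defining relations of the generalized Drazin inverse of $x$; uniqueness of the generalized Drazin inverse then forces $x^{d}=P$. Because both diagonal blocks are already assumed generalized Drazin invertible, the only genuine unknown is the off-diagonal entry $X$, and each axiom will collapse to an algebraic identity relating $X$ to $a$, $b$, $d$ and the spectral idempotents $a^{\pi}=1-aa^{d}$ and $d^{\pi}=1-dd^{d}$. The formula for $y$ will then be obtained from that for $x$ by a short similarity argument, so no second computation is needed.

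For the commutation relation $xP=Px$, multiplying out both products shows the diagonal entries agree automatically (since $aa^{d}=a^{d}a$ and $dd^{d}=d^{d}d$), and the relation reduces to the single Sylvester-type identity $aX-Xd=a^{d}b-bd^{d}$. I would verify this by inserting the series defining $X$, sliding $a$ and $d$ past their own spectral idempotents, and reindexing: the $a^{\pi}$-sum telescopes to leave $-a^{\pi}bd^{d}$, the $d^{\pi}$-sum telescopes to leave $a^{d}bd^{\pi}$, and after expanding $a^{\pi}$ and $d^{\pi}$ the surviving terms recombine to $a^{d}b-bd^{d}$. Similarly, $PxP=P$ reduces to $a^{d}aX+Xdd^{d}+a^{d}bd^{d}=X$, which I would check by projection: multiplying the three summands of $X$ on the left by $a^{d}a=1-a^{\pi}$ kills the $a^{\pi}$-summand and fixes the rest, while multiplying on the right by $dd^{d}=1-d^{\pi}$ kills the $d^{\pi}$-summand and fixes the rest, and reassembling the survivors returns $X$. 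Both steps are routine once the bookkeeping is set.

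The main obstacle is the quasinilpotency of $x-x^{2}P$. Its diagonal entries work out to $a-a^{2}a^{d}=aa^{\pi}$ and $d-d^{2}d^{d}=dd^{\pi}$, which are quasinilpotent by the very definition of $a^{d}$ and $d^{d}$, so the task is to show that an upper-triangular element $\begin{pmatrix} p & q \\ 0 & r\end{pmatrix}$ of $\mathcal{M}_{2}(\mathcal{B})$ with quasinilpotent diagonal is itself quasinilpotent. I would isolate this as a lemma and prove it by estimating the $(1,2)$-entry of the $n$-th power, namely $\sum_{k=0}^{n-1}p^{k}qr^{n-1-k}$: given $\varepsilon>0$, quasinilpotency of $p$ and $r$ yields a constant $C$ with $\|p^{k}\|\le C\varepsilon^{k}$ and $\|r^{m}\|\le C\varepsilon^{m}$ for all $k,m$, so this entry is bounded in norm by $\|q\|\,C^{2}n\,\varepsilon^{n-1}$, whose $n$-th root tends to $\varepsilon$; combined with the diagonal estimates and letting $\varepsilon\to0$ gives $\lim_{n\to\infty}\|(x-x^{2}P)^{n}\|^{1/n}=0$.

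Finally, for $y$ I would avoid repeating the work by noting that $y=JxJ$ for the involution $J=\begin{pmatrix} 0 & 1 \\ 1 & 0\end{pmatrix}$ with $J=J^{-1}$, and that the generalized Drazin inverse is invariant under similarity, so that $y^{d}=Jx^{d}J=\begin{pmatrix} d^{d} & 0 \\ X & a^{d}\end{pmatrix}$, which is exactly the stated expression.
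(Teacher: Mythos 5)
Your proposal is correct, but there is nothing in the paper to compare it against: Lemma~\ref{triangle} is stated with a citation to \cite{Djordjev2001} and no proof is given, so any argument here is necessarily independent of the paper's text. Your route --- take $P=\left(\begin{smallmatrix}a^{d}&X\\0&d^{d}\end{smallmatrix}\right)$, verify the three defining relations, invoke uniqueness of the generalized Drazin inverse, then transfer to $y$ via $y=JxJ$ with $J=\left(\begin{smallmatrix}0&1\\1&0\end{smallmatrix}\right)$, $J^{2}=1$, and similarity invariance --- is the natural self-contained verification, and each reduction you state is the right one. Writing $X=S_{1}+S_{2}+S_{3}$ for the three summands in the stated order, telescoping indeed gives $aX-Xd=-a^{\pi}bd^{d}+a^{d}bd^{\pi}-aa^{d}bd^{d}+a^{d}bdd^{d}=a^{d}b-bd^{d}$, and $a^{d}aX=S_{2}+S_{3}$, $Xdd^{d}=S_{1}+S_{3}$, whence $a^{d}aX+Xdd^{d}+a^{d}bd^{d}=S_{1}+S_{2}+S_{3}=X$. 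Your estimate for the triangular quasinilpotency step is also sound; a slicker alternative is to note that for $\lambda\notin\sigma(p)\cup\sigma(r)$ the element $\left(\begin{smallmatrix}p&q\\0&r\end{smallmatrix}\right)-\lambda$ has explicit inverse $\left(\begin{smallmatrix}(p-\lambda)^{-1}&-(p-\lambda)^{-1}q(r-\lambda)^{-1}\\0&(r-\lambda)^{-1}\end{smallmatrix}\right)$, so the spectrum of the triangle lies in $\sigma(p)\cup\sigma(r)=\{0\}$, with no estimates needed.

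The one step you must add is convergence of the two series defining $X$: all of your reindexing and telescoping presupposes absolute convergence, and the series $\sum_{i=0}^{\infty}a^{i}bd^{d(i+2)}$ on its own need not converge, since the spectral radii of $a$ and $d^{d}$ can be large enough that their product exceeds $1$. The correct reading places $a^{\pi}$ inside the sum, so that the $i$-th term is $(aa^{\pi})^{i}a^{\pi}bd^{d(i+2)}$; then $\lim_{n}\|(aa^{\pi})^{n}\|^{1/n}=0$ together with $\limsup_{n}\|d^{dn}\|^{1/n}<\infty$ makes the root test give absolute convergence, and the $d^{\pi}$ series is handled symmetrically using quasinilpotency of $dd^{\pi}$. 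This is exactly the $C\varepsilon^{k}$ device you already deploy for the quasinilpotency axiom, so the fix costs one sentence, but without it the formal manipulations are not justified.
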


\begin{lemma}\label{(abpi)d=adbpi}
Let $e$ be an idempotent of $\mathcal {B}$ and let $a\in \mathcal{B}$ be generalized Drazin invertible such that $ea(1-e)=0$. Then $ea$ and $a(1-e)$ are both generalized Drazin invertible, and
 $$(ea)^d=ea^d, \quad (a(1-e))^d=a^d(1-e),\quad(ea)^n=ea^n$$
 for any positive integer $n$.
\end{lemma}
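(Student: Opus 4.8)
The plan is to transport the problem into the matrix algebra $\mathcal M_2(\mathcal B,e)$ via the isometric isomorphism $\sigma$ of Lemma \ref{lempierce} and to exploit the triangular structure that the hypothesis forces. First I would record the reformulation $ea(1-e)=0\iff ea=eae$, which says precisely that
$$\sigma(a)=\begin{pmatrix} eae & 0 \\ (1-e)ae & (1-e)a(1-e)\end{pmatrix}$$
is lower triangular in $\mathcal M_2(\mathcal B,e)$, and moreover that $\sigma(ea)=\sigma(e)\sigma(a)=\begin{pmatrix} eae & 0\\ 0 & 0\end{pmatrix}$ and $\sigma(a(1-e))=\sigma(a)\sigma(1-e)=\begin{pmatrix} 0 & 0\\ 0 & (1-e)a(1-e)\end{pmatrix}$ are diagonal. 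Equivalently $ea=eae\in e\mathcal Be$ and $a(1-e)=(1-e)a(1-e)\in(1-e)\mathcal B(1-e)$, the latter because $a(1-e)-(1-e)a(1-e)=ea(1-e)=0$.

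Next, since $a$ is GD-invertible, so is $\sigma(a)$, with $\sigma(a)^d=\sigma(a^d)$ by Lemma \ref{lempierce}(1). Applying the lower-triangular formula of Lemma \ref{triangle} (the $y$-case, with $eae$ in the role of the top-left entry and $(1-e)a(1-e)$ in the role of the bottom-right entry) shows that $\sigma(a)^d$ is again lower triangular, with diagonal corners $(eae)^d$ and $((1-e)a(1-e))^d$ (GD-inverses taken in the corner algebras $e\mathcal Be$ and $(1-e)\mathcal B(1-e)$) and vanishing upper-right entry. Comparing this entrywise with
$$\sigma(a^d)=\begin{pmatrix} ea^de & ea^d(1-e)\\ (1-e)a^de & (1-e)a^d(1-e)\end{pmatrix}$$
yields the two facts I need: the key relation $ea^d(1-e)=0$ (equivalently $ea^d=ea^de$), and the corner identifications $(eae)^d=ea^de$ and $((1-e)a(1-e))^d=(1-e)a^d(1-e)$.

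To finish parts (1)--(3) I would pass from the corner GD-inverses back to $\mathcal B$. The bridging observation is that for $x\in e\mathcal Be$ one has $ex=xe=x$ with $e$ commuting with $x$, whence from the identities $x^d=x(x^d)^2=(x^d)^2x$ it follows that $x^d=ex^d=x^de\in e\mathcal Be$; consequently $x$ is GD-invertible in $\mathcal B$ if and only if it is GD-invertible in $e\mathcal Be$, and the two inverses coincide (the quasinilpotency requirement is insensitive to the ambient unit, since norms and powers of an element of $e\mathcal Be$ are unchanged). Applying this with $x=eae=ea$ gives GD-invertibility of $ea$ together with $(ea)^d=(eae)^d=ea^de=ea^d$, the last step using $ea^d(1-e)=0$; the symmetric argument in the corner $(1-e)\mathcal B(1-e)$ gives $(a(1-e))^d=((1-e)a(1-e))^d=(1-e)a^d(1-e)=a^d(1-e)$, again by $ea^d(1-e)=0$. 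I expect this bridging step to be the main obstacle: correctly matching a GD-inverse computed inside a corner algebra with the one computed in the full algebra $\mathcal B$, and pinning down the relation $ea^d(1-e)=0$, rather than the triangular computation itself.

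Finally, part (4) is an independent elementary induction using only $ea(1-e)=0$. I would first prove $ea^n=ea^ne$ for all $n\geq1$: the base case is the hypothesis, and $ea^{n+1}(1-e)=ea^ne\cdot a(1-e)=ea^n\bigl(ea(1-e)\bigr)=0$. Granting this, $(ea)^{n+1}=ea\cdot(ea)^n=ea\cdot ea^n$, and since $eaea^n-ea^{n+1}=ea(e-1)a^n=-ea(1-e)a^n=0$, one concludes $(ea)^{n+1}=ea^{n+1}$, completing the induction.
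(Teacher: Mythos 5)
Your proof is correct, and its first half coincides with the paper's own argument: both transport the problem through the isomorphism $\sigma$ of Lemma~\ref{lempierce} and use the triangular formula of Lemma~\ref{triangle} to deduce from $ea(1-e)=0$ that $\sigma(a^d)=(\sigma(a))^d$ is again lower triangular, i.e.\ $ea^d(1-e)=0$. Where you genuinely diverge is in how this is converted into GD-invertibility of $ea$ and $a(1-e)$. The paper takes $ea^d$ as a candidate inverse and checks the three defining conditions directly: commutativity and $ea^d\cdot ea\cdot ea^d=ea^d$ follow from $ea=eae$ and $ea^d=ea^de$, and the quasinilpotency of $ea-(ea)^2ea^d=eaa^\pi$ is obtained from the norm estimate $\|(eaa^\pi)^n\|^{1/n}=\|ea^na^\pi\|^{1/n}\leq\|e\|^{1/n}\|a^na^\pi\|^{1/n}\rightarrow 0$. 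You instead read off from the triangular formula the corner identifications $(eae)^d=ea^de$ and $((1-e)a(1-e))^d=(1-e)a^d(1-e)$, with GD-inverses taken in the corner algebras, and then prove a bridging fact --- that for $x\in e\mathcal{B}e$ GD-invertibility in $e\mathcal{B}e$ and in $\mathcal{B}$ coincide with the same inverse --- via the clean identity $x^d=x(x^d)^2=(x^d)^2x$ forcing $x^d=ex^de$, together with the observation that quasinilpotency is a norm-limit condition insensitive to the ambient unit. Both finishes are valid: the paper's is self-contained modulo the first step and never has to discuss corner algebras, while yours eliminates the norm computation at the cost of the corner-versus-ambient subtlety, which you handle correctly and which is the genuinely nontrivial point of your route. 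One caveat you share with the paper: concluding that $(\sigma(a))^d$ is triangular (with GD-invertible diagonal entries) uses the converse direction of Lemma~\ref{triangle} --- GD-invertibility of a triangular matrix forces that of its diagonal blocks --- which is contained in the cited source \cite{Djordjev2001} but not in the lemma as stated here. Your closing induction for $(ea)^n=ea^n$ is fine and supplies the details the paper dismisses as easy.
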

\begin{proof}
 Since $ea(1-e)=0$, combining Lemma \ref{lempierce}  and Lemma \ref{triangle}, we have $ea^d(1-e)=0$. Then
$eae a^d =eaa^d =ea^de a $ \text{and} $ea^de a  ea^d =ea^d $.
Furthermore,
\begin{multline*}
   \lim_{n\rightarrow\infty}\|(ea -(ea )^2ea^d )^n\|^{\frac{1}{n}}=\lim_{n\rightarrow\infty}\|(eaa^\pi )^n\|^{\frac{1}{n}}\\
=\lim_{n\rightarrow\infty}\|ea^n a^\pi  \|^{\frac{1}{n}}
\leq \lim_{n\rightarrow\infty}\|e \|^\frac{1}{n}\|a^n a^\pi \|^\frac{1}{n} =0.
\end{multline*}
Hence $ea $ is generalized Drazin invertible and $(ea )^d=ea^d $.
Similarly we can prove that $ a (1-e)$ is generalized Drazin invertible and $( a(1-e))^d= a^d(1-e)$.
Using $ea(1-e)=0$ we get easily $(ea)^n=ea^n$ for any positive integer $n$.
\end{proof}

\begin{lemma}\label{cline}(\cite{cline2014})(Cline's Formula)
For $a,b\in\mathcal {B}$, $ab$ is generalized Drazin invertible if and only if so is $ba$. Furthermore, if $ab$ is generalized Drazin invertible, then
$$(ba)^d = b(ab)^{2d}a.$$
\end{lemma}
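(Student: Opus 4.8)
The plan is to prove the stronger one-directional statement that if $ab$ is generalized Drazin invertible, then so is $ba$, with $(ba)^d = b(ab)^{2d}a$; the biconditional then follows immediately by interchanging the roles of $a$ and $b$ (one may also see the quasinilpotent part of the equivalence directly from $(ab)^{n+1}=a(ba)^n b$ and $(ba)^{n+1}=b(ab)^n a$). Write $p=ab$ and $c=(ab)^d$, so that by definition $pc=cp$, $cpc=c$, and $p-p^2c$ is quasinilpotent. First I would record the auxiliary identities $pc^2=c^2p=c$ and $(pc)^2=pc$, all immediate consequences of $pc=cp$ together with $cpc=c$. In particular $p^\pi=1-pc$ is an idempotent that commutes with $p$, and $pp^\pi=p-p^2c$ is quasinilpotent.

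Next I would set $x=bc^2a=b(ab)^{2d}a$ and verify the first two defining axioms by direct computation. Using $pc^2=c^2p=c$ one gets $(ba)x=b(ab)c^2a=bpc^2a=bca$ and likewise $x(ba)=bc^2(ab)a=bca$, so $x$ commutes with $ba$. The same identity then yields
$$x(ba)x=bca\cdot bc^2a=bc(ab)c^2a=bc(pc^2)a=bc^2a=x,$$
which is the equation $x(ba)x=x$. These steps are routine once the collapse identities for $c$ are in hand.

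The main obstacle is the third axiom, namely the quasinilpotency of $ba-(ba)^2x$. Since $(ba)^2x=(ba)(bca)=b(ab)ca=b(pc)a$, we have $ba-(ba)^2x=b(1-pc)a=bp^\pi a$. Writing $q=bp^\pi a$ and $w=pp^\pi$, the crux is the telescoping identity $q^n=bw^{\,n-1}p^\pi a$, which I would prove by induction: the factor $ab=p$ produced between consecutive copies of $p^\pi$ collapses through $p^\pi p\,p^\pi=pp^\pi=w$ and $wp^\pi=w$. Because $w$ is quasinilpotent, the submultiplicativity estimate $\|q^n\|^{1/n}\le\|b\|^{1/n}\,\|w^{\,n-1}\|^{1/n}\,\|p^\pi a\|^{1/n}\to 0$ shows $q$ is quasinilpotent. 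This verifies all three conditions, so $(ba)^d=b(ab)^{2d}a$, and interchanging $a$ and $b$ completes the equivalence. The only genuinely delicate point is spotting the idempotent structure of $pc$ that drives the telescoping; everything else reduces to bookkeeping with the commutation relations.
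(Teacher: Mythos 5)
Your proof is correct. Note, however, that the paper itself offers no argument for this lemma at all: it is imported verbatim from the cited reference (Liao--Chen--Cui, ``Cline's formula for the generalized Drazin inverse''), so there is no in-paper proof to compare against. What you have written is a complete, self-contained verification in the style of that reference: the collapse identities $pc^2=c^2p=c$ and $(pc)^2=pc$ do give the commutation $(ba)x=x(ba)=bca$ and the equation $x(ba)x=x$ for $x=b(ab)^{2d}a$; and your telescoping identity $q^n=bw^{\,n-1}p^\pi a$ with $q=bp^\pi a$, $w=pp^\pi$ is exactly the right mechanism for the only nontrivial axiom, since $p^\pi p\,p^\pi=pp^\pi$ and $wp^\pi=w$ make the induction go through, after which submultiplicativity and quasinilpotency of $w=p-p^2c$ force $\lim_n\|q^n\|^{1/n}=0$ (the exponent mismatch $\|w^{n-1}\|^{1/n}=\bigl(\|w^{n-1}\|^{1/(n-1)}\bigr)^{(n-1)/n}$ is harmless since the base tends to $0$ and the exponent is bounded below by $1/2$). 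The symmetry step interchanging $a$ and $b$ correctly upgrades the one-directional statement to the stated equivalence, with uniqueness of the generalized Drazin inverse guaranteeing the formula is unambiguous. In short: where the paper relies on an external citation, your argument makes the lemma self-contained at the cost of about a page, and it is sound.
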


The following lemma is an immediate corollary of \cite[Corollary 3.3.7]{guophd}.
\begin{lemma}\label{guo3.3.7cor}
Let $x=\begin{pmatrix}
  a&b\\
  c&d
\end{pmatrix}\in\mathcal {M}_2(\mathcal {B})$ with $a$ and $d$  generalized Drazin invertible.
If $abc=0,~bd=0,~(bc)^{d}=0$, then
  $x$ is generalized Drazin invertible, and
 $$x^d=
 \begin{pmatrix}
   \phi_{1}a                        & \phi_{1}b \\
   \tau a+\psi_{1} &  d^{d}+\tau b
 \end{pmatrix},$$
 where
\begin{equation*}
\begin{array}{ll}
   \phi_{n}&=\Sigma_{j=0}^{\infty}(bc)^{j}a^{d(2j+2n)},\\
   \psi_{n}&=\Sigma_{j=0}^{\infty}d^{d(2j+2n)}(cb)^{j}c,\\
   \tau&=\Sigma_{i=0}^{\infty}(cb+d^2)^{i}ca^{d(2i+3)}+\Sigma_{i=0}^{\infty}d^{\pi}d^{2i+1}c\phi_{i+2}\\
    &\quad-\Sigma_{i=0}^{\infty}d^2(cb+d^2)^i\psi_1a^{d(2i+3)}+\Sigma_{i=0}^{\infty}\psi_{i+2}a^{2i+1}a^\pi\\
   &\quad+\Sigma_{i=0}^{\infty}d^{d(2i+3)}c(a^2+bc)^ia^\pi-\Sigma_{i=0}^{\infty}d^{d(2i+1)}c(bc)^i\phi_1-\psi_1a^d.
\end{array}
\end{equation*}
\end{lemma}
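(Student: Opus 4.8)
My plan is to read off $x^{d}$ from an additive splitting adapted to the hypothesis $bd=0$, and then to push all of the genuine difficulty into a single block. Write $x=u+v$ with $u=\begin{pmatrix} a & b\\ c & 0\end{pmatrix}$ and $v=\begin{pmatrix} 0 & 0\\ 0 & d\end{pmatrix}$. Since $bd=0$ one computes immediately that $uv=\begin{pmatrix}0&bd\\0&0\end{pmatrix}=0$, so the standard additive formula for the generalized Drazin inverse is available: if $u$ and $v$ are generalized Drazin invertible and $uv=0$, then $x=u+v$ is generalized Drazin invertible with
$$x^{d}=\sum_{n=0}^{\infty}(v^{d})^{n+1}u^{n}u^{\pi}+\sum_{n=0}^{\infty}v^{\pi}v^{n}(u^{d})^{n+1}.$$
This formula can itself be produced from Cline's formula (Lemma~\ref{cline}), so it stays within the machinery already at hand. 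The factor $v$ is elementary, with $v^{d}=\begin{pmatrix}0&0\\0&d^{d}\end{pmatrix}$, $v^{\pi}=\begin{pmatrix}1&0\\0&d^{\pi}\end{pmatrix}$, and $v^{n}=\begin{pmatrix}0&0\\0&d^{n}\end{pmatrix}$ for $n\ge1$, so that $x^{d}$ is assembled entirely from $u^{d}$, $u^{\pi}$, and the powers $u^{n}$; in particular the entry $d^{d}$ and the $d$-dependent parts of $\psi_{1}$ and $\tau$ emerge from the two sums above.

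The real content is the generalized Drazin inverse of the $(2,2,0)$ block $u$, and this is where the remaining hypotheses $abc=0$ and $(bc)^{d}=0$ are consumed. I would split once more, $u=\tilde R+\tilde S$ with $\tilde R=\begin{pmatrix}a&0\\c&0\end{pmatrix}$ and $\tilde S=\begin{pmatrix}0&b\\0&0\end{pmatrix}$. Here $\tilde S^{2}=0$, while $\tilde R$ is lower triangular with generalized Drazin invertible diagonal, so Lemma~\ref{triangle} gives at once
$$\tilde R^{d}=\begin{pmatrix}a^{d}&0\\ c(a^{d})^{2}&0\end{pmatrix}.$$
The obstruction is that the interaction $\tilde S\tilde R=\begin{pmatrix}bc&0\\0&0\end{pmatrix}$ does not vanish; it is only \emph{quasinilpotent}, precisely because $(bc)^{d}=0$ (and, by Lemma~\ref{cline}, $(cb)^{d}=0$ as well). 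Consequently $u^{d}$ cannot be obtained from a single additive formula but must be expanded as a convergent perturbation series in the quasinilpotent element $bc$ (respectively $cb$); this is exactly the origin of $\phi_{n}=\sum_{j}(bc)^{j}a^{d(2j+2n)}$ and $\psi_{n}=\sum_{j}d^{d(2j+2n)}(cb)^{j}c$. Throughout this expansion the identity $abc=0$ — equivalently $bc=a^{\pi}bc$, whence $a^{d}bc=0$ — is what annihilates the cross terms and keeps the series in the closed forms above.

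Substituting the resulting expressions for $u^{d}$, $u^{\pi}$, and the powers $u^{n}$ into the additive formula, and then reindexing the emerging double sums, is the final and most laborious step; it is where $\phi_{1}$, $\psi_{1}$, and the multi-term expression $\tau$ crystallize into their stated shapes. I expect the main obstacle to be precisely this bookkeeping, together with the verification of the spectral condition that $x-x^{2}x^{d}$ be quasinilpotent. Both rest on the same analytic fact: $\|(bc)^{j}\|^{1/j}\to0$ forces $\|(bc)^{j}(a^{d})^{2j}\|^{1/j}\le\|(bc)^{j}\|^{1/j}\|a^{d}\|^{2}\to0$, so every series above converges by the root test and the associated remainder is quasinilpotent.

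As an independent check — and as a cleaner route should the reindexing prove unwieldy — I would verify directly that the proposed matrix $\xi=\begin{pmatrix}\phi_{1}a&\phi_{1}b\\ \tau a+\psi_{1}&d^{d}+\tau b\end{pmatrix}$ satisfies the three defining relations $x\xi=\xi x$, $\xi x\xi=\xi$, and $x-x^{2}\xi$ quasinilpotent. The algebra here is controlled by the recurrences $\phi_{n+1}=\phi_{n}(a^{d})^{2}$ and $\phi_{n}=(a^{d})^{2n}+bc\,\phi_{n+1}$, together with their analogues for $\psi_{n}$, which collapse most of the products; as before, the quasinilpotency of $x-x^{2}\xi$ is the one step that genuinely requires $(bc)^{d}=0$.
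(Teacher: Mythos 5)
The paper offers no proof of this lemma at all --- it is simply quoted as an immediate corollary of \cite[Corollary 3.3.7]{guophd} --- so your attempt must stand on its own, and as written it does not. The outer layer is sound: $bd=0$ does give $uv=0$ for your splitting, and the additive formula you quote for GD-invertible $u,v$ with $uv=0$ is a genuine theorem (Djordjevi\'{c}--Wei), although it is not among this paper's lemmas and is not simply a consequence of Cline's formula (its standard proof uses the Pierce decomposition of Lemma~\ref{lempierce} together with Lemma~\ref{triangle}). The genuine gap is at the point you yourself call ``the real content'': the additive formula requires $u=\begin{pmatrix} a&b\\ c&0 \end{pmatrix}$ to be generalized Drazin invertible in the first place, and your argument for that --- $\tilde S\tilde R=\begin{pmatrix} bc&0\\0&0 \end{pmatrix}$ is quasinilpotent because $(bc)^d=0$, hence $(\tilde R+\tilde S)^d$ ``must be expanded as a convergent perturbation series'' --- invokes a principle that does not exist. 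Every additive theorem for the generalized Drazin inverse rests on algebraic annihilation hypotheses ($uv=0$, or $uvu=0$ together with $uv^2=0$, and the like), not on mere quasinilpotence of a product of the summands: GD-invertibility is the spectral condition that $0$ not be an accumulation point of the spectrum, and it is not stable under adding a nilpotent whose interaction with the other summand is only spectrally small. Asserting that $\phi_n$ and $\psi_n$ ``emerge'' from such an expansion therefore assumes exactly what has to be proved; your root-test computation shows only that the series converge, i.e.\ that the candidate matrix is a well-defined element, not that $x$ is GD-invertible or that this element inverts it.

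Your fallback route --- verify directly that $\xi=\begin{pmatrix}\phi_1 a&\phi_1 b\\ \tau a+\psi_1& d^d+\tau b\end{pmatrix}$ satisfies $x\xi=\xi x$, $\xi x\xi=\xi$, and that $x-x^2\xi$ is quasinilpotent, using the recurrences $\phi_{n}=a^{d(2n)}+bc\,\phi_{n+1}$, $\phi_{n+1}=\phi_n a^{d2}$ and the fact $a^dbc=0$ forced by $abc=0$ --- would, if carried out, constitute a complete and self-contained proof, since by Koliha's definition the three identities themselves establish existence of $x^d$ and identify it with $\xi$. But in your write-up this verification is announced rather than performed, and its hardest ingredient, the quasinilpotence of $x-x^2\xi$ (the one place where $(bc)^d=0$ and $abc=0$ must genuinely be combined), is precisely the step skipped in both of your routes. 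Until that computation, or a correct additive argument for $u$, is actually written out, the proof is incomplete.
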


Now we can give our first main result. Recall that $a^{dn}=(a^d)^n$
and $\begin{pmatrix}
  a&b\\c&d
\end{pmatrix}^\top=\begin{pmatrix}
  a&c\\b&d
\end{pmatrix}$ for $a,b,c,d\in \mathcal {B}.$
\begin{theorem}\label{th220babpi=0}
Let $N=\begin{pmatrix}
        E & I \\
        F & 0
      \end{pmatrix}$
 be an operator matrix with $E$ and $F$ generalized Drazin invertible. If $F^dEF^\pi=0$ and $F^\pi FE=0$,  then $N$ is generalized Drazin invertible,
and
\begin{align*}
N^d&=\sum\limits_{i=0}^\infty\begin{pmatrix}
E^{2i+1}E^\pi F^\pi EF^d & 0   \\
E^{2i}E^\pi( F^\pi-E F^\pi EF^d)EF^d & 0
\end{pmatrix}^\top\begin{pmatrix}
0 & F^d \\
I & -EF^d
\end{pmatrix}^{2i+1}\\
&\quad+\begin{pmatrix}
    E^dF^\pi+\sum_{i=0}^\infty E^{d(2i+3)}F^\pi F^{i+1} & FF^d \\
    F^d-E^dF^\pi EF^d+\sum_{i=0}^\infty E^{d(2i+2)}F^\pi F^i & -FF^dEF^d
  \end{pmatrix}^\top.
\end{align*}
\end{theorem}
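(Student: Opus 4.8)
The plan is to reduce $N$ to a block-triangular matrix via the spectral idempotent of $F$ and then feed the diagonal blocks into the triangular formula of Lemma \ref{triangle}. Write $p=F^\pi$ and $q=FF^d=1-p$. First I would extract the algebraic content of the two hypotheses: left-multiplying $F^dEF^\pi=0$ by $F$ gives $FF^dEF^\pi=qEp=0$, so $E$ is ``lower triangular'' for the Pierce decomposition determined by $q$; and $F^\pi FE=0$ reads $F_2E=0$, where $F_2:=FF^\pi$ is quasinilpotent while $F_1:=F^2F^d$ is invertible on the corner $q\mathcal Bq$. Note also $qFp=0$ since $F$ commutes with $q$.

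Decomposing the underlying space by $q$ in each of the two coordinates and reordering, the relations $qEp=0$ and $qFp=0$ show that the summand $pX\oplus pX$ is $N$-invariant, so $N$ becomes block lower triangular, $\begin{pmatrix}\mathcal A&0\\ \mathcal C&\mathcal D\end{pmatrix}$, with $\mathcal A=\begin{pmatrix}E_1&q\\ F_1&0\end{pmatrix}$ on the $q$-corner, $\mathcal D=\begin{pmatrix}E_4&p\\ F_2&0\end{pmatrix}$ on the $p$-corner (here $E_1=qEq$, $E_4=pEp$), and a coupling block $\mathcal C$ built from $pEq$. By Lemma \ref{lempierce} this identification is isometric and intertwines generalized Drazin inverses, so it remains to compute $\mathcal A^d$, $\mathcal D^d$ and assemble them through Lemma \ref{triangle}.

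For $\mathcal A$, since $F_1$ is invertible on $q\mathcal Bq$ the matrix is actually invertible, with $\mathcal A^{-1}=\begin{pmatrix}0&F^d\\ q&-E_1F^d\end{pmatrix}$; this is the origin of the block $\begin{pmatrix}0&F^d\\ I&-EF^d\end{pmatrix}$ in the statement, and crucially $\mathcal A^\pi=0$. The delicate step is $\mathcal D^d$. The naive orientation of $\mathcal D$ does \emph{not} meet the hypotheses of Lemma \ref{guo3.3.7cor} (it would require $E_4F_2=0$, which is unavailable); instead I would rotate $\mathcal D$ by the corner swap to $\begin{pmatrix}0&F_2\\ p&E_4\end{pmatrix}$, for which Lemma \ref{guo3.3.7cor} fires with $a=0$, $b=F_2$, $c=p$, $d=E_4$: the conditions become $abc=0$ (trivially), $bd=F_2E_4=0$ (exactly $F^\pi FE=0$), and $(bc)^d=F_2^d=0$ (quasinilpotence). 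To run the lemma I must know $E_4$ is generalized Drazin invertible; here $qEp=0$ re-enters, letting me invoke Lemma \ref{(abpi)d=adbpi} with the idempotent $q$ to get $E_4=EF^\pi$, $E_4^d=E^dF^\pi$, $E_4^n=E^nF^\pi$ and $E_4^\pi=E^\pi F^\pi$ — precisely the expressions that surface as $E^dF^\pi$, the powers $E^{2i}E^\pi$, $E^{2i+1}E^\pi$ and the ubiquitous factor $F^\pi$ in the final formula.

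Finally I would substitute into Lemma \ref{triangle}: because $\mathcal A^\pi=0$ the term $\mathcal A^\pi\sum_i \mathcal A^i\mathcal C\mathcal D^{d(i+2)}$ drops, leaving the off-diagonal block $X=\sum_i\mathcal A^{d(i+2)}\mathcal C\mathcal D^i\mathcal D^\pi-\mathcal A^d\mathcal C\mathcal D^d$. The powers $\mathcal A^{d(i+2)}=(\mathcal A^{-1})^{i+2}$ generate the powers of $\begin{pmatrix}0&F^d\\ I&-EF^d\end{pmatrix}$, $\mathcal D^i\mathcal D^\pi$ contributes the $E^{2i}E^\pi$, $E^{2i+1}E^\pi$ factors, and $-\mathcal A^d\mathcal C\mathcal D^d$ together with the diagonal blocks $\mathcal A^d,\mathcal D^d$ yields the closed-form matrix; translating back by Lemma \ref{lempierce}(2) — summing the corner blocks and undoing the reordering — accounts for the $\top$ bracketing in the statement. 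The main obstacle is the computation of $\mathcal D^d$: getting the orientation right so that Lemma \ref{guo3.3.7cor} triggers on the one-sided hypothesis $F_2E_4=0$, and simplifying $E_4^d,E_4^n,E_4^\pi$ back to $E$ via Lemma \ref{(abpi)d=adbpi}; once that is done, the reindexing needed to match the exact double sum is lengthy but routine.
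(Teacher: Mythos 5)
Your reduction is genuinely different from the paper's, and most of it is sound. The paper splits $N$ with the Pierce idempotent $\begin{pmatrix} FF^d & 0 \\ 0 & I \end{pmatrix}$; the resulting $\sigma(N)=\begin{pmatrix} a & b \\ c & d \end{pmatrix}$ is \emph{not} triangular (its block $b=\begin{pmatrix} 0 & 0 \\ FF^\pi & 0 \end{pmatrix}$ is nonzero), and the full matrix is fed into Lemma \ref{guo3.3.7cor} after checking $ab=0$, $bd=0$, $(bc)^d=0$. You split instead with $\begin{pmatrix} FF^d & 0 \\ 0 & FF^d \end{pmatrix}$, which after reordering makes $N$ block lower triangular, so Lemma \ref{guo3.3.7cor} is needed only on the quasinilpotent corner $\mathcal D$, where it degenerates since its $(1,1)$ entry is $0$. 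That part is correct and even illuminating: the corner swap is exactly the right move (the unrotated orientation would require $E_4F_2=EFF^\pi=0$, which is the hypothesis of the dual Theorem \ref{th220bpiab=0}, not of this theorem), $F_2E_4=0$ is precisely $F^\pi FE=0$, and the identifications $E_4=EF^\pi$, $E_4^d=E^dF^\pi$, $E_4^n=E^nF^\pi$, $E_4^\pi=E^\pi F^\pi$ via Lemma \ref{(abpi)d=adbpi} are valid, as is the invertibility of $\mathcal A$ with $\mathcal A^\pi=0$.

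However, your final assembly misapplies Lemma \ref{triangle}, and as written that step fails. For the lower-triangular pattern $y=\begin{pmatrix} d & 0 \\ b & a \end{pmatrix}$ of that lemma, your matrix $\begin{pmatrix} \mathcal A & 0 \\ \mathcal C & \mathcal D \end{pmatrix}$ forces the matching $d=\mathcal A$, $b=\mathcal C$, $a=\mathcal D$, so that
\begin{equation*}
X=\mathcal D^\pi\sum_{i=0}^\infty\mathcal D^i\,\mathcal C\,\mathcal A^{d(i+2)}+\sum_{i=0}^\infty\mathcal D^{d(i+2)}\,\mathcal C\,\mathcal A^i\mathcal A^\pi-\mathcal D^d\,\mathcal C\,\mathcal A^d,
\end{equation*}
and it is the \emph{middle} sum that vanishes because $\mathcal A^\pi=0$. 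You matched the other way around ($a=\mathcal A$, $d=\mathcal D$), obtaining $X=\sum_i\mathcal A^{d(i+2)}\mathcal C\mathcal D^i\mathcal D^\pi-\mathcal A^d\mathcal C\mathcal D^d$. Every term there is identically zero: writing $e$ for your idempotent, $\mathcal A^{d}=\mathcal A^{d}e$ while $\mathcal C=(1-e)\mathcal C$, hence $\mathcal A^{d(i+2)}\mathcal C=\mathcal A^{d(i+2)}e(1-e)\mathcal C=0$. So your proposal, followed literally, outputs a block-diagonal $N^d$ and misses all coupling terms of the statement (e.g.\ $-E^dF^\pi EF^d$ and the entire first series). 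The repair is mechanical and actually confirms your verbal description: with the correct matching, $\mathcal D^\pi\mathcal D^i$ sits on the \emph{left} of $\mathcal C$ (producing the factors $E^{2i}E^\pi$, $E^{2i+1}E^\pi$) and $\mathcal A^{d(i+2)}=\mathcal A^{-(i+2)}$ on the \emph{right} (producing the powers of $\begin{pmatrix} 0 & F^d \\ I & -EF^d \end{pmatrix}$), but the term that drops is $\sum_i\mathcal D^{d(i+2)}\mathcal C\mathcal A^i\mathcal A^\pi$, not the one you named.
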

\begin{proof}
We adopt the convention that $F^e=FF^d$.
Let $e=\begin{pmatrix}
          F^e & 0 \\
          0   & I
        \end{pmatrix}$, $\sigma$ as in Lemma \ref{lempierce}, and $\sigma(N) =\begin{pmatrix}
      a & b \\
      c & d
    \end{pmatrix}$. Since $F^dEF^\pi=0$,
 we have
\begin{equation*}\label{eqa1}
a=\begin{pmatrix}
    F^eE & F^e \\
    FF^e    & 0
  \end{pmatrix},~
~b=\begin{pmatrix}
    0       & 0 \\
    FF^\pi    & 0
  \end{pmatrix},
~~c=\begin{pmatrix}
    F^\pi EF^e & F^\pi \\
    0    & 0
  \end{pmatrix},
~~d=\begin{pmatrix}
    EF^\pi &  0 \\
    0      & 0
  \end{pmatrix}.
\end{equation*}
Note that  $a$ has the group inverse
\begin{equation}\label{eqa3}
 a^\sharp=\begin{pmatrix}
             0 & F^d \\
             F^e & -F^{e}EF^{d}
           \end{pmatrix}
\end{equation}
and so $aa^\pi=0$. Using Lemma \ref{(abpi)d=adbpi} we have $(EF^\pi)^d=E^dF^\pi,$
and
$$(a^\sharp)^n
=\left(\begin{pmatrix}
  F^e&0\\
  0&F^e
\end{pmatrix}
\begin{pmatrix}
  0&F^d\\
  I&-EF^d
\end{pmatrix}\right)^n
=\begin{pmatrix}
  F^e&0\\
  0&F^e
\end{pmatrix}
\begin{pmatrix}
  0&F^d\\
  I&-EF^d
\end{pmatrix}^n$$
for any positive integer $n$. Hence
\begin{equation}\label{d^d}
   d^d=\begin{pmatrix}
    E^dF^\pi &  0 \\
    0      & 0
  \end{pmatrix}.
\end{equation}Note that $(F^\pi F)^d=0$,
and so
\begin{equation}\label{eqa2}
bc=\begin{pmatrix}
            0 & 0 \\
            F^\pi FE & F^\pi F
          \end{pmatrix}
 ~~~~\text{and}~~~~ cb=\begin{pmatrix}
                      F^\pi F & 0 \\
                      0 & 0
                    \end{pmatrix}.
\end{equation}
Using Lemma \ref{triangle} we get
$(bc)^d=(cb)^d=0$. Since
$F^dE^{i+1}F^\pi=F^d(EF^\pi)^{i+1}=0$
for any nonnegative integer $i$, we have
\begin{equation*}
ab=0,~~bd=0,~~bca=0.
\end{equation*}
By Lemma \ref{guo3.3.7cor}
 we have
\begin{equation}\label{eqa4}
   (\sigma(N))^d=
 \begin{pmatrix}
   a^d        & 0 \\
   \Sigma_{0} &  d^{d}+\Lambda
 \end{pmatrix},
\end{equation}
 where
 \begin{equation*}
   \begin{array}{ll}
   \Lambda=&\sum_{i=0}^{\infty}d^{d(2i+3)}c(bc)^ib,\\
   \Sigma_{0}=&\sum_{i=0}^{\infty}d^{2i}ca^{d(2i+2)}+\sum_{i=0}^{\infty}d^{\pi}d^{2i+1}ca^{d(2i+3)}
    -\sum_{i=0}^{\infty}d^{2i+1}d^dca^{d(2i+2)}\\
    &+\sum_{i=0}^{\infty}d^{d(2i+2)}(cb)^ic-d^dca^d-d^{2d}ca^da.
   \end{array}
 \end{equation*}
Substituting \eqref{eqa3}, \eqref{d^d} and \eqref{eqa2} into \eqref{eqa4} and using Lemma \ref{lempierce} will give the expression of $N^d$ that we wanted.
%\begin{equation*}
%\begin{array}{ll}
%\Lambda
%=&\begin{pmatrix}
%    \sum_{i=0}^\infty(E^d)^{2i+3}F^\pi F^{i+1} & 0 \\
%    0 & 0
%  \end{pmatrix},\\
%\Sigma_{0}=&\sum\limits_{i=0}^\infty\begin{pmatrix}
%                                         E^{2i+1}E^\pi F^\pi EF^d & E^{2i}E^\pi F^\pi EF^d-E^{2i+1}E^\pi F^\pi (EF^d)^2   \\
%                                         0 & 0
%                                       \end{pmatrix}\begin{pmatrix}
%                                                           0 & F^d \\
%                                                           F^e & -F^eEF^d
%                                                         \end{pmatrix}^{2i+1}\\
%&+\begin{pmatrix}
%    0 & -E^dF^\pi EF^d+\sum_{i=0}^\infty(E^d)^{2i+2}F^\pi F^i \\
%    0 & 0
%  \end{pmatrix}.
%\end{array}
%\end{equation*}
%
%Obviously, The result only needs routine computations.
\end{proof}

\begin{corollary}\label{corNdn}
Let $N=\begin{pmatrix}
        E & I \\
        F & 0
      \end{pmatrix}$
 be an operator matrix with $E$ and $F$ generalized Drazin invertible. If $F^dEF^\pi=0$ and $F^\pi FE=0$,  then $N$ is generalized Drazin invertible,
and
\begin{align*}
 N^{dn}&=\sum\limits_{i=0}^\infty\begin{pmatrix}
   E^{2i+1}E^\pi F^\pi EF^d& 0   \\
   E^{2i}E^\pi(F^\pi-EF^\pi EF^d)EF^d & 0
 \end{pmatrix}^\top
 \begin{pmatrix}
    0&F^d\\
   I&-EF^d
    \end{pmatrix}^{2i+n}\\
&\quad+\sum\limits_{j=1}^n\begin{pmatrix}
    E^{dj}F^\pi+\sum_{i=0}^\infty E^{d(2i+j+2)}F^{i+1}F^\pi      & 0 \\
    E^{dj}(\sum_{i=0}^\infty E^{d(2i+1)}F^{i}F^\pi-F^\pi EF^d)& 0
  \end{pmatrix}^\top
  \begin{pmatrix}
                      0 & F^d \\
                      FF^d & -FF^dEF^d
                    \end{pmatrix}^{n-j}\\
&\quad+\begin{pmatrix}
                 0 & F^d \\
                 FF^d & -FF^dEF^d
               \end{pmatrix}^n
\end{align*}
for any positive integer $n$.
\end{corollary}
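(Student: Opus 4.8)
The plan is to deduce the corollary directly from the factorization already obtained in the proof of Theorem~\ref{th220babpi=0}, using that $(\,\cdot\,)^{dn}$ is just the $n$-th power of the generalized Drazin inverse and that the isomorphism $\sigma$ of Lemma~\ref{lempierce} turns powers into powers. By Lemma~\ref{lempierce}(1), $\sigma(N^{dn})=\sigma\big((N^d)^n\big)=\big(\sigma(N)^d\big)^n$, so it suffices to raise the block matrix from \eqref{eqa4},
\[
\sigma(N)^d=\begin{pmatrix} a^d & 0\\ \Sigma_0 & d^d+\Lambda\end{pmatrix},
\]
to the $n$-th power and then recover $N^{dn}=\sigma^{-1}\big((\sigma(N)^d)^n\big)$ as the sum of its four entries (Lemma~\ref{lempierce}). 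Since this matrix is block lower triangular, the standard power formula gives
\[
\big(\sigma(N)^d\big)^n=\begin{pmatrix} a^{dn} & 0\\ \sum_{j=0}^{n-1}(d^d+\Lambda)^{j}\,\Sigma_0\,a^{d(n-1-j)} & (d^d+\Lambda)^n\end{pmatrix},
\]
whence $N^{dn}=a^{dn}+\sum_{j=0}^{n-1}(d^d+\Lambda)^{j}\Sigma_0\,a^{d(n-1-j)}+(d^d+\Lambda)^n$.

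Next I would evaluate these three groups separately from the data recorded in the proof of Theorem~\ref{th220babpi=0}. For the first, \eqref{eqa3} and the identity $(a^\sharp)^n=\mathrm{diag}(FF^d,FF^d)\begin{pmatrix}0&F^d\\ I&-EF^d\end{pmatrix}^{n}$ give $a^{dn}=(a^\sharp)^n=\begin{pmatrix}0&F^d\\ FF^d&-FF^dEF^d\end{pmatrix}^{n}$, which is precisely the stand-alone third term of the statement. For the diagonal block I would use $c(bc)^ib=(cb)^{i+1}$ together with \eqref{eqa2} and \eqref{d^d} to see that $d^d+\Lambda=\begin{pmatrix}G&0\\0&0\end{pmatrix}$ is supported in the top-left corner, where $G=E^dF^\pi+\sum_{i\ge0}E^{d(2i+3)}F^\pi F^{i+1}$; consequently the left factors $(d^d+\Lambda)^{j}$ contribute exactly the $E^{dj}$-type coefficients occurring in the second sum of the statement. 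One also checks that $\Sigma_0$ is top-row supported, so that every convolution summand is of the form $\begin{pmatrix}\ast&0\\ \ast&0\end{pmatrix}^\top$, matching the shape of both displayed sums. Throughout, the hypotheses $F^dEF^\pi=0$ and $F^\pi FE=0$, the commutation $F^\pi F=FF^\pi$, and $(EF^\pi)^d=E^dF^\pi$ (Lemma~\ref{(abpi)d=adbpi}) are what collapse the various products into powers of $E,F,E^d,F^d$.

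The real work, and the step I expect to be the main obstacle, is the bookkeeping that recombines the finite convolution $\sum_{j=0}^{n-1}(d^d+\Lambda)^{j}\Sigma_0\,a^{d(n-1-j)}$ together with $(d^d+\Lambda)^n$ into the two sums of the corollary. Here one expands $\Sigma_0$ into its six sub-sums, pushes each through the convolution, and reindexes the resulting double series. The sub-sums carrying high powers $a^{d(2i+\cdots)}$ on the right merge with the factor $a^{d(n-1-j)}$; after absorbing the projection $\mathrm{diag}(FF^d,FF^d)$ coming from $(a^\sharp)^{k}=\mathrm{diag}(FF^d,FF^d)\begin{pmatrix}0&F^d\\ I&-EF^d\end{pmatrix}^{k}$, and using that $E^dE=1-E^\pi$ so that the $j$-dependence cancels against the powers $d^{2i}=\mathrm{diag}(E^{2i}F^\pi,0)$, the double sum collapses to the single geometric series in $i$ with exponent $2i+n$ on $\begin{pmatrix}0&F^d\\ I&-EF^d\end{pmatrix}$ and the spectral idempotent $E^\pi$ in the coefficients (the first term). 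The remaining sub-sums, which carry only a single factor $a^d$ or $a^da$, raise the power of $a^\sharp$ from $n-1-j$ to $n-j$ and, once the $(d^d+\Lambda)^{j}$ factors are absorbed into the coefficients, regroup into the finite sum $\sum_{j=1}^{n}(\cdots)^\top R^{\,n-j}$ with $R=\begin{pmatrix}0&F^d\\ FF^d&-FF^dEF^d\end{pmatrix}$ (the second term). This reindexing is of exactly the same type as the one already performed to derive the first term of Theorem~\ref{th220babpi=0}, now carried along the extra index $n$. An alternative is a direct induction on $n$ via $N^{d(n+1)}=N^{dn}N^d$ with base case $n=1$ equal to Theorem~\ref{th220babpi=0}; however, the triangular-power route is preferable because it confines all the analytic content to the already-proved factorization \eqref{eqa4} and reduces the corollary to purely algebraic manipulation of series.
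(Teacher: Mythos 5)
Your proposal is correct in outline, but it is organized differently from the paper's own proof, and the difference is worth spelling out. The paper never returns to the Pierce decomposition: it splits the \emph{statement-level} formula of Theorem~\ref{th220babpi=0} as $N^d=P+Q+R$, where $P$ is the infinite sum, $R=\begin{pmatrix}0&F^d\\ FF^d&-FF^dEF^d\end{pmatrix}$, and $Q$ is the remaining (top-row supported) matrix; it then proves the annihilation relations $P^2=0$, $RP=0$, $RQ=0$, $PQ=QP=0$, computes $Q^n$ in closed form, and expands $N^{dn}=(P+Q+R)^n=Q^n+R^n+PR^{n-1}+\sum_{j=1}^{n-1}Q^jR^{n-j}$, finishing with a routine computation. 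You instead re-enter the \emph{proof} of the theorem, raise the block lower triangular matrix \eqref{eqa4} to the $n$-th power, and pull back through $\sigma$; your structural claims all check out ($a^{dn}=R^n$ since $a^\sharp=R$; $d^d+\Lambda$ is top-left supported with corner entry $G=E^dF^\pi+\sum_{i\geq0}E^{d(2i+3)}F^\pi F^{i+1}$, and indeed $(d^d+\Lambda)^j$ has corner entry $E^{dj}F^\pi+\sum_{i\geq0}E^{d(2i+j+2)}F^{i+1}F^\pi$ because $\Lambda d^d=0=\Lambda^2$, which follows from $F^\pi FE=0\Rightarrow F^\pi FE^d=0$). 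The two routes are closely related: your $R$ equals the paper's $R$, but the paper's $Q$ mixes the two Pierce components $(d^d+\Lambda)$ and part of $\Sigma_0$, and most of the paper's annihilation relations are exactly the shadow of the Pierce grading you exploit (e.g.\ $RP=0$ since $R\in e\mathcal{A}e$ and $P\in(1-e)\mathcal{A}e$ with $\mathcal{A}=\mathcal{M}_2(\mathcal{B})$), the only one requiring genuine computation being $QP=0$. What the paper's organization buys is a corollary proof that is self-contained given only the statement of Theorem~\ref{th220babpi=0}, with the combinatorics of the $n$-th power made transparent by the vanishing products; what yours buys is that all multiplicative structure comes for free from triangularity, at the cost of reopening the theorem's proof and of a heavier final recombination of the convolution $\sum_{j=0}^{n-1}(d^d+\Lambda)^j\Sigma_0a^{d(n-1-j)}$ into the two displayed sums (driven, as you correctly indicate, by $E^dE^\pi=0$ killing the $j\geq1$ contributions against the $d^\pi$-bearing terms of $\Sigma_0$). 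Both proofs ultimately delegate comparable series bookkeeping to ``routine computation,'' so your sketch is at the same level of completeness as the paper's.
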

\begin{proof}
Let $N^d=P+Q+R$ by Theorem \ref{th220babpi=0}, where
\begin{align*}
P&=\sum\limits_{i=0}^\infty\begin{pmatrix}
E^{2i+1}E^\pi F^\pi EF^d & 0   \\
E^{2i}E^\pi( F^\pi-E F^\pi EF^d)EF^d & 0
\end{pmatrix}^\top\begin{pmatrix}
0 & F^d \\
I & -EF^d
\end{pmatrix}^{2i+1},\\
Q&=\begin{pmatrix}
    E^dF^\pi+\sum_{i=0}^\infty E^{d(2i+3)}F^\pi F^{i+1} & 0\\
    -E^dF^\pi EF^d+\sum_{i=0}^\infty E^{d(2i+2)}F^\pi F^i & 0
  \end{pmatrix}^\top,\\
R&=\begin{pmatrix}
  0&F^d\\
  FF^d&-FF^dEF^d
\end{pmatrix}.
\end{align*}
Since $F^dE^{i+1}F^\pi=F^d(EF^\pi)^{i+1}=0$ for any nonnegative integer $i$,
and since $F^\pi FE=0$ we have $P^2=0$,~$ RP=0$,~$RQ=0$,~$PQ=QP=0$, {and}
\begin{align*}
Q^n=\sum\limits_{j=1}^n\begin{pmatrix}
    E^{dn}F^\pi+\sum_{i=0}^\infty E^{d(2i+n+2)}F^{i+1}F^\pi      & 0 \\
    E^{dn}(\sum_{i=0}^\infty E^{d(2i+1)}F^{i}F^\pi-F^\pi EF^d)& 0
  \end{pmatrix}^\top
\end{align*}
for any positive integer $n$. Then $N^{dn}=Q^n+R^n+PR^{n-1}+\sum_{j=1}^{n-1}Q^jR^{n-j}$, by a routine computation, we get the expression of $N^{dn}$ as shown in Corollary \ref{corNdn}.
\end{proof}

The following theorem, which is a dual version of Theorem \ref{th220babpi=0}, can be proved similarly.

\begin{theorem}\label{th220bpiab=0}
Let  $N=\begin{pmatrix}
        E & I \\
        F & 0
      \end{pmatrix}$
be an operator matrix with $E$ and $F$ generalized Drazin invertible. If $F^\pi EF^d=0$ and $EFF^\pi=0$, then $N$ is generalized Drazin invertible,
and
\begin{align*}
N^d&=\sum\limits_{i=0}^\infty\begin{pmatrix}
                                        0 & F^d \\
                                        FF^d & -EF^d
                                      \end{pmatrix}^{2i+2}
                                      \begin{pmatrix}
                                      F^dEF^\pi E^{2i+2}E^\pi & F^dEF^\pi E^{2i+1}E^\pi \\
                                      GE & G
                                      \end{pmatrix}\\
   &\quad+\begin{pmatrix}
      \sum\limits_{i=0}^\infty F^iF^\pi E^{d(2i+1)}+F^dEF^\pi E^\pi & \sum\limits_{i=0}^\infty F^iF^\pi E^{d(2i+2)}+F^d-F^dEF^\pi E^d \\
      H+FF^d-EF^dEF^\pi
      &HE^d+EF^dEF^\pi E^d-EF^d
    \end{pmatrix}
\end{align*}
such that
$$G=(FF^dEF^\pi-EF^dEF^\pi E)E^{2i}E^\pi,$$
and
$$H=\sum_{i=1}^\infty F^{i+1}F^\pi E^{d(2i+2)}-FF^dEF^\pi E^d.$$
\end{theorem}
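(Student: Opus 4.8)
The plan is to follow the proof of Theorem \ref{th220babpi=0} step for step, replacing every ingredient by its order-reversed counterpart. This reversal is forced: the hypotheses $F^\pi EF^d=0$ and $EFF^\pi=0$ are exactly the images of $F^dEF^\pi=0$ and $F^\pi FE=0$ under reversal of multiplication, and no algebra \emph{iso}morphism can turn one pair into the other, so a literal appeal to Theorem \ref{th220babpi=0} is not available. Concretely, I keep the same idempotent $e=\begin{pmatrix}F^e&0\\0&I\end{pmatrix}$ with $F^e=FF^d$, apply the isometric isomorphism $\sigma$ of Lemma \ref{lempierce}, and write $\sigma(N)=\begin{pmatrix}a&b\\c&d\end{pmatrix}$. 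Computing the four Pierce corners gives, before any simplification,
\begin{equation*}
a=\begin{pmatrix}F^eEF^e&F^e\\FF^e&0\end{pmatrix},\quad
b=\begin{pmatrix}F^eEF^\pi&0\\FF^\pi&0\end{pmatrix},\quad
c=\begin{pmatrix}F^\pi EF^e&F^\pi\\0&0\end{pmatrix},\quad
d=\begin{pmatrix}F^\pi EF^\pi&0\\0&0\end{pmatrix}.
\end{equation*}
The first hypothesis $F^\pi EF^d=0$ (equivalently $F^\pi EF^e=0$, whence $F^\pi E=F^\pi EF^\pi$) collapses $c$ to $\begin{pmatrix}0&F^\pi\\0&0\end{pmatrix}$ and $d$ to $\begin{pmatrix}F^\pi E&0\\0&0\end{pmatrix}$, and simplifies the corner of $a$ to $EF^e$.

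The second step is to identify the correct corollary. In contrast with Theorem \ref{th220babpi=0}, the products that now vanish are $ca$, $dcb$ and $(cb)^d$ rather than $ab$, $bd$ and $(bc)^d$. I would check $ca=0$ and $dcb=0$ directly from the two hypotheses (the latter using $EFF^\pi=0$ through $F^\pi EFF^\pi=0$), and obtain $(cb)^d=0$ from $cb=\begin{pmatrix}FF^\pi&0\\0&0\end{pmatrix}$ together with the quasinilpotency of $FF^\pi$ via Lemma \ref{triangle}. Conjugating by the flip $P=\begin{pmatrix}0&I\\I&0\end{pmatrix}$ turns $\sigma(N)$ into $\begin{pmatrix}d&c\\b&a\end{pmatrix}$, whose blocks satisfy precisely the hypotheses $dcb=0$, $ca=0$, $(cb)^d=0$ of Lemma \ref{guo3.3.7cor}; since conjugation by the invertible $P$ commutes with the generalized Drazin inverse, I recover $(\sigma(N))^d=P\begin{pmatrix}d&c\\b&a\end{pmatrix}^{d}P$. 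Here I also need $a$ and $d$ generalized Drazin invertible: $d^d=\begin{pmatrix}F^\pi E^d&0\\0&0\end{pmatrix}$ follows from Lemma \ref{(abpi)d=adbpi} applied with the idempotent $F^\pi$ (giving $(F^\pi E)^d=F^\pi E^d$, the mirror of $(EF^\pi)^d=E^dF^\pi$), while $a$ is group invertible with $aa^\pi=0$ exactly as in \eqref{eqa3}, the powers of its group inverse factoring through the block $\begin{pmatrix}0&F^d\\FF^d&-EF^d\end{pmatrix}$ appearing in the statement.

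Finally, I would substitute these explicit corners into the formula of Lemma \ref{guo3.3.7cor} for $\begin{pmatrix}d&c\\b&a\end{pmatrix}^{d}$, conjugate back by $P$, and read off $N^d$ through Lemma \ref{lempierce}(2). The reversal of multiplication order is exactly what makes the group-inverse powers appear on the \emph{left} of the $E$-blocks and suppresses the $\top$ operation present in Theorem \ref{th220babpi=0}. The main obstacle is not conceptual but computational: the expression for $x^d$ in Lemma \ref{guo3.3.7cor} is a sum of six separate series built from $\phi_n$, $\psi_n$ and $\tau$, and collapsing them—after the conjugation by $P$ and the substitutions $(F^\pi E)^d=F^\pi E^d$, $(cb)^d=0$ and $F^\pi E^{i+1}F^d=0$—into the compact closed form with the auxiliary operators $G$ and $H$ demands careful index bookkeeping: matching the exponents $2i+2$, keeping track of the fact that $FF^\pi$ is only quasinilpotent rather than zero, and verifying that the cross terms cancel. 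I expect this resummation, together with keeping every flip and transpose convention consistent, to be the only genuine difficulty.
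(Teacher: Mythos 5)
Your proposal is correct and coincides in substance with the paper's own proof: instead of conjugating by the flip $P$, the paper simply takes the complementary idempotent $e=\begin{pmatrix} F^\pi & 0\\ 0 & 0\end{pmatrix}$, whose Pierce decomposition yields exactly your flipped matrix $\begin{pmatrix} d & c\\ b & a\end{pmatrix}$ outright, and then makes the same verifications you list ($dcb=0$, $ca=0$, $(cb)^d=0$, the identity $(F^\pi E)^d=F^\pi E^d$ via Lemma \ref{(abpi)d=adbpi}, and group invertibility of the $FF^d$-corner with vanishing quasinilpotent part) before invoking Lemma \ref{guo3.3.7cor}. Since using the idempotent $1-e$ is literally the same as conjugating the $e$-decomposition by the flip, your extra conjugation step is only a cosmetic repackaging of the paper's argument, not a different route.
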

%&+\begin{pmatrix}
%      F^\pi E^d+H+F^dEF^\pi E^\pi & F^\pi E^{2d}+HE^d+F^d-F^dEF^\pi E^d \\
%      FHE^d-F^eEF^\pi E^d+F^e-EF^dEF^\pi
%      &FHE^{2d}-F^eEF^\pi E^{2d}+EF^dEF^\pi E^d-EF^d
%    \end{pmatrix},
%\end{array}
%\end{equation*}
%where
%$$G=(F^eEF^\pi-EF^dEF^\pi E)E^{2i}E^\pi,$$
%and
%$$H=\sum_{i=1}^\infty F^{i}F^\pi (E^d)^{2i+1}.$$
%\end{theorem}
%
\begin{proof}
We adopt the convention that $F^e=FF^d$.
 Let $e=\begin{pmatrix}
          F^\pi & 0 \\
          0   & 0
        \end{pmatrix}$.
Using Lemma \ref{(abpi)d=adbpi} we have $(EF^\pi)^d=E^dF^\pi.$
The proof is similar in spirit to that of Theorem \ref{th220babpi=0}.
Since $F^\pi EF^d=0$, we have
\begin{equation*}
a=\begin{pmatrix}
    F^\pi E &  0 \\
    0      & 0
  \end{pmatrix},~~
b=\begin{pmatrix}
    0 & F^\pi \\
    0    & 0
  \end{pmatrix},~~
c=\begin{pmatrix}
    F^eEF^\pi       & 0 \\
    FF^\pi    & 0
  \end{pmatrix},~~
d=\begin{pmatrix}
    EF^e & F^e \\
    FF^e    & 0
  \end{pmatrix}.
\end{equation*}
Then
\begin{equation}\label{eqb1}
(bc)^d=0=(cb)^d,\quad
  d^\sharp=\begin{pmatrix}
             0 & F^d \\
             F^e & -EF^{d}
           \end{pmatrix}~~~~\text{and}~~~~
 dd^\pi=0. \
\end{equation}
Since $F^\pi E^{i+1}F^d=(F^\pi E)^{i+1}F^d=0$ for any nonnegative integer $i$, we have
\begin{equation}\label{eqb2}
abc=0,~~bd=0,~~dcbc=0.
\end{equation}
Combining \eqref{eqb1} and \eqref{eqb2}, in a way exactly similar to Theorem \ref{th220babpi=0}, we get the result.
\end{proof}

\section{Applications to a $2\times2$ operator matrix}
\begin{lemma}\cite[Theorem 3.2.2]{guophd}\label{guoth3.2.2}
Let
$x=\begin{pmatrix}
      a & b \\
      c & d
    \end{pmatrix}\in\mathcal {M}_2(\mathcal {B})$
with $a$ and $d$ generalized Drazin invertible. If $ca^d=0$ and $ca^ib=0$ for any nonnegative integer $i$, then
  $x$ is generalized Drazin invertible, and
\begin{equation*}
 x^d=\begin{pmatrix}
         a^d+\varphi & \phi \\
         \psi & d^d
       \end{pmatrix},
\end{equation*}
where
\begin{equation*}
\begin{array}{ll}
 \psi=&\sum_{i=0}^\infty d^{d(i+2)}ca^i,\\
    \phi=&a^\pi\sum_{i=0}^\infty a^ibd^{d(i+2)}+\sum_{i=0}^\infty a^{d(i+2)}bd^id^\pi-a^dbd^d,\\
    \varphi=&a^\pi\sum_{i=0}^\infty\sum_{j=0}^\infty a^ibd^{d(i+j+3)}ca^j
       -\sum_{i=0}^\infty\sum_{j=0}^\infty a^{d(i+1)}bd^id^{d(j+2)}ca^j\\
       &+\sum_{i=0}^\infty\sum_{j=0}^ia^{d(i+3)}bd^jca^{i-j}.
\end{array}
\end{equation*}
\end{lemma}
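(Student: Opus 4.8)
The plan is to realize $x$ as an additive perturbation of a block upper triangular operator whose generalized Drazin inverse is already supplied by Lemma \ref{triangle}. I would write $x = M + N$ with $M = \begin{pmatrix} a & b \\ 0 & d\end{pmatrix}$ and $N = \begin{pmatrix} 0 & 0 \\ c & 0\end{pmatrix}$, so that $N^2 = 0$ and, by Lemma \ref{triangle}, $M^d = \begin{pmatrix} a^d & \phi \\ 0 & d^d\end{pmatrix}$ with $\phi$ being exactly the quantity appearing in the statement. The decisive observation is that the claimed answer is precisely $x^d = M^d + \begin{pmatrix} \varphi & 0 \\ \psi & 0\end{pmatrix}$: the $(1,2)$ and $(2,2)$ entries are already accounted for by the triangular formula, so only the first-column correction $\begin{pmatrix}\varphi & 0\\ \psi & 0\end{pmatrix}$ produced by $c$ needs to be explained.

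First I would convert the two hypotheses into usable multiplicative identities. From $ca^d = 0$ one gets $caa^d = ca^d a = 0$, hence $c = c(1-aa^d) = ca^\pi$ and $ca^{dk} = 0$ for every $k \geq 1$; thus $c$ is supported on the quasinilpotent part of $a$, giving $ca^i = c(aa^\pi)^i$. Combining this with $ca^i b = 0$ for all $i$, a short computation yields $c\phi = 0$, and therefore $NM^d = 0$; one checks similarly that $NMN = 0$. These three relations, $N^2 = 0$, $NM^d = 0$, and $NMN = 0$, are the only consequences of the hypotheses I expect to need. They also settle convergence of every series in the statement: since $ca^i = c(aa^\pi)^i$ and both $aa^\pi$ and $dd^\pi$ are quasinilpotent, factors such as $ca^i$ and $(dd^\pi)^i$ decay faster than any geometric rate, so $\psi = \sum_i d^{d(i+2)}ca^i$ and the double sums defining $\varphi$ are absolutely convergent.

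With $M^d$ in hand and the annihilation relation $NM^d = 0$, I would combine $M$ and $N$ through an additive identity for the generalized Drazin inverse valid under these conditions, in the spirit of Lemma \ref{guo3.3.7cor}: expanding the resulting correction as a geometric-type series in $M^d$ and $N$ and collecting blocks should reproduce the $(2,1)$ entry $\psi = \sum_i d^{d(i+2)} c a^i$ from the pattern $(M^d)^{i+2} N M^i$, and the $(1,1)$ entry $\varphi$ from the iterated (double-sum) contributions; the hypotheses $ca^d=0$ and $ca^ib=0$ force every stray second-column term to vanish, which is exactly what confines the correction to the first column. Equivalently, and perhaps more transparently, I could verify the three defining properties directly for $Z := \begin{pmatrix} a^d+\varphi & \phi \\ \psi & d^d\end{pmatrix}$, namely $Zx=xZ$, $ZxZ=Z$, and the quasinilpotency of $x-x^2Z$.

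The hard part will be the last, spectral, condition. Once $Zx = xZ$ is established, $G := xZ$ is idempotent, and $x - x^2 Z = x(1-G)$ plays the role of $xx^\pi$; I would compute this remainder in block form and show that it is block triangular with diagonal entries built from $aa^\pi$ and $dd^\pi$, both quasinilpotent by hypothesis, coupled only through the nilpotent contributions of $b$ and $c$, so that the whole remainder is quasinilpotent. Carrying out $Zx=xZ$ and $ZxZ=Z$ amounts to manipulating the infinite series via $c = ca^\pi$ and $ca^ib=0$, which is routine but lengthy; isolating and bounding the quasinilpotent part of $x-x^2Z$ is the genuinely delicate step, since it requires separating the honestly quasinilpotent blocks from the merely nilpotent coupling terms.
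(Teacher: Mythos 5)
The paper itself never proves this lemma---it is quoted verbatim from Guo's dissertation \cite{guophd}---so your proposal must stand on its own, and judged that way it has the right skeleton but two genuine gaps. The decomposition $x=M+N$, the identification of $M^d$ via Lemma \ref{triangle}, and the identities $c=ca^\pi$, $c\phi=0$, $N^2=NMN=0$, $NM^d=0$ are all correct. But your assertion that these three relations ``are the only consequences of the hypotheses I expect to need'' is false, and this is fatal to your first route. Take $M$ the nilpotent shift on $\mathbb{C}^3$ ($Me_1=0$, $Me_2=e_1$, $Me_3=e_2$) and $N=e_3e_1^{T}$: then $N^2=0$, $NMN=0$ and $NM^d=0$ (since $M^d=0$), yet $M+N$ is a cyclic permutation matrix, invertible with $(M+N)^d=(M+N)^2\neq0$, while every correction term of the proposed shape $M^{d(i+2)}NM^i$ vanishes identically. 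So no additive identity of the kind you invoke can be ``valid under these conditions,'' and Lemma \ref{guo3.3.7cor}---a block-matrix result with hypotheses $abc=0$, $bd=0$, $(bc)^d=0$ of a different nature---does not supply one; appealing to something ``in its spirit'' assumes exactly what must be proved. What actually powers the argument is the full family $NM^iN=0$ for \emph{all} $i\geq0$, which does follow from the hypotheses (the $(1,2)$ entry of $M^i$ is $\sum_{j=0}^{i-1}a^jbd^{i-1-j}$ and $ca^jb=0$), but which you never derive and which, as the example shows, is not a consequence of your three relations; with it, the candidate $Z=M^d+\sum_{i\geq0}M^{d(i+2)}NM^i$ does satisfy $Zx=xZ$ and $ZxZ=Z$ by direct computation.

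The second gap is the spectral step, where your sketch is wrong rather than merely unfinished. With the relations above one finds $x-x^2Z=MM^\pi+M^\pi N-\sum_{i\geq1}M^{di}NM^i$, whose $(2,1)$ block equals $d^\pi c-\sum_{i\geq1}d^{di}ca^i$ and is nonzero in general; the remainder is therefore \emph{not} block triangular, and in any case a $2\times2$ operator matrix with quasinilpotent diagonal blocks and nonzero off-diagonal coupling need not be quasinilpotent, so ``triangular with quasinilpotent diagonal'' is not an argument. A correct finish along your lines does exist: put $U=MM^\pi$, $V=M^\pi N$, $R=\sum_{i\geq1}M^{di}NM^i$, check from $NM^iN=0$ and $NM^d=0$ that $UR=0$, $V^2=VR=RV=R^2=0$ and $VU^kV=RU^kV=0$ for all $k\geq0$, and prove by induction that $(U+V-R)^n=U^n+\sum_{k=0}^{n-1}U^kVU^{n-1-k}-RU^{n-1}$; the quasinilpotence of $U=MM^\pi$ then forces $\|(x-x^2Z)^n\|^{1/n}\to0$. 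With these two repairs---deriving $NM^iN=0$ for all $i$ and replacing the triangularity claim by such an expansion---your strategy yields the lemma, but as written the proposal does not constitute a proof.
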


Now, based on an observation on the matrix decomposition, we apply the representations of generalized Drazin inverse of the (2,2,0) operator matrix to give our another main result. Recall that $a^e=a^da$, $a^{dn}=(a^d)^n$
and $\begin{pmatrix}
  a&b\\c&d
\end{pmatrix}^\top=\begin{pmatrix}
  a&c\\b&d
\end{pmatrix}$ for $a,b,c,d\in \mathcal {B}.$

\begin{theorem}\label{BDd=0,FEFpi=0}
 Let  $M=\begin{pmatrix}
         A & B \\
         C & D
       \end{pmatrix}$ be an operator matrix
with $A$ and $D$ generalized Drazin invertible. If $(BC)^dA(BC)^\pi=0$, $(BC)^\pi BCA=0$, $BD^d=0$, and $BD^iC=0$ for any positive integer $i$,
then
  $M$ is generalized Drazin invertible, and
\begin{align*}
                           M^{d}=&\begin{pmatrix}
    \begin{pmatrix}
       0 \\
       D^e+D\Omega
     \end{pmatrix}+\begin{pmatrix}
                          A & I \\
                          C & 0
                        \end{pmatrix}\Phi
  \end{pmatrix}\begin{pmatrix}
    \begin{pmatrix}
       0 &D^d+\Omega
     \end{pmatrix}+\Psi\begin{pmatrix}
                          I & 0 \\
                          0 & B
                        \end{pmatrix}
  \end{pmatrix}\\
&+\begin{pmatrix}
    \begin{pmatrix}
       0 \\
       D\Psi
     \end{pmatrix}+\begin{pmatrix}
                          A & I \\
                          C & 0
                        \end{pmatrix}\Delta^d
  \end{pmatrix}\begin{pmatrix}
    \begin{pmatrix}
       0 &
    \Phi
     \end{pmatrix}+\Delta^d\begin{pmatrix}
                          I & 0 \\
                          0 & B
                        \end{pmatrix}
  \end{pmatrix},
    \end{align*}
where
\begin{align*}
\Phi &=\sum\limits_{k=0}^\infty\Delta^{d({k+2})}\begin{pmatrix}
0 \\ BD^{k+1}
\end{pmatrix}, \\
\Psi &=\sum\limits_{k=0}^\infty\begin{pmatrix}
D^{k}D^\pi C & 0
\end{pmatrix}\Delta^{d(k+2)}
+\sum\limits_{k=0}^\infty\begin{pmatrix}
D^{d(k+2)}C & 0
\end{pmatrix}\Delta^k\Delta^\pi-\begin{pmatrix}
D^dC & 0
\end{pmatrix}\Delta^d,
\\
\Omega &=\sum\limits_{k=0}^\infty\sum\limits_{l=0}^\infty\begin{pmatrix}
D^kD^\pi C & 0
\end{pmatrix}\Delta^{d(k+l+3)}\begin{pmatrix}
0 \\ BD^{l+1}
\end{pmatrix}\\
&\quad-\sum\limits_{k=0}^\infty\sum\limits_{l=0}^\infty\begin{pmatrix}
D^{d(k+1)}C & 0
\end{pmatrix}\Delta^k\Delta^{d(l+2)}\begin{pmatrix}
0 \\ BD^{l+1}
\end{pmatrix}\\
&\quad+\sum\limits_{k=0}^\infty\sum\limits_{l=0}^k\begin{pmatrix}
D^{d(k+3)}C & 0
\end{pmatrix}\Delta^l\begin{pmatrix}
0 \\ BD^{k-l+1}
\end{pmatrix},\\
 \Delta^{dn} &=\sum\limits_{i=0}^\infty\begin{pmatrix}
   A^{2i+1}A^\pi(BC)^\pi A(BC)^d& 0   \\
   A^{2i}A^\pi((BC)^\pi-A(BC)^\pi A(BC)^d)A(BC)^d & 0
 \end{pmatrix}^\top   \\
 &\quad\qquad\times\begin{pmatrix}
    0&(BC)^d\\
   I&-A(BC)^d
    \end{pmatrix}^{2i+n}\\
&\quad+\sum\limits_{j=1}^n\begin{pmatrix}
    A^{dj}(BC)^\pi+\sum_{i=0}^\infty A^{d(2i+j+2)}(BC)^{i+1}(BC)^\pi      & 0 \\
    \sum_{i=0}^\infty A^{d(2i+j+1)}(BC)^{i}(BC)^\pi-A^{dj}(BC)^\pi A(BC)^d& 0
  \end{pmatrix}^\top\\
  &\quad\qquad\times\begin{pmatrix}
                      0 & (BC)^d \\
                      BC(BC)^d & -BC(BC)^dA(BC)^d
                    \end{pmatrix}^{n-j}\\
&\quad+\begin{pmatrix}
                 0 & (BC)^d \\
                 BC(BC)^d & -BC(BC)^dA(BC)^d
               \end{pmatrix}^n
\end{align*}
for $n\geq1$ and
$
\Delta=\begin{pmatrix}
                A & I \\
                BC &0
              \end{pmatrix}.
$
\end{theorem}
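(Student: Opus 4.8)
The plan is to reduce the whole computation to the $2\times2$ block formula of Lemma~\ref{guoth3.2.2}, with the off–diagonal block $\Delta$ supplied by Theorem~\ref{th220babpi=0} and Corollary~\ref{corNdn}, by combining a swap similarity with Cline's formula (Lemma~\ref{cline}). First I would conjugate by the involution $S=\begin{pmatrix}0&I\\ I&0\end{pmatrix}=S^{-1}$, so that $SMS=\begin{pmatrix}D&C\\ B&A\end{pmatrix}=:\widetilde M$ and $M^{d}=S\widetilde M^{d}S$. The point of the swap is that $\widetilde M$ admits the companion factorization $\widetilde M=GH$ with
\[
G=\begin{pmatrix}D&C&0\\ 0&A&I\end{pmatrix},\qquad H=\begin{pmatrix}I&0\\ 0&I\\ B&0\end{pmatrix},
\]
whose reverse product $W:=HG=\begin{pmatrix}D&C&0\\ 0&A&I\\ BD&BC&0\end{pmatrix}$, read as a $2\times2$ block operator, is $W=\begin{pmatrix}D&\begin{pmatrix}C&0\end{pmatrix}\\ \begin{pmatrix}0\\ BD\end{pmatrix}&\Delta\end{pmatrix}$.

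The lower–right corner of $W$ is exactly $\Delta=\begin{pmatrix}A&I\\ BC&0\end{pmatrix}$, which is the $(2,2,0)$ matrix of Theorem~\ref{th220babpi=0} with $E=A$ and $F=BC$. Its two hypotheses $F^{d}EF^{\pi}=0$ and $F^{\pi}FE=0$ are precisely our assumptions $(BC)^{d}A(BC)^{\pi}=0$ and $(BC)^{\pi}BCA=0$, so $\Delta$ is generalized Drazin invertible, $\Delta^{d}$ is given by Theorem~\ref{th220babpi=0}, and the powers $\Delta^{dn}$ needed inside $\Phi,\Psi,\Omega$ are given by Corollary~\ref{corNdn}; this is the displayed formula for $\Delta^{dn}$.

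Next I would apply Lemma~\ref{guoth3.2.2} to $W$ with $a=D$, $b=\begin{pmatrix}C&0\end{pmatrix}$, $c=\begin{pmatrix}0\\ BD\end{pmatrix}$, $d=\Delta$. Both conditions hold: $ca^{d}=\begin{pmatrix}0\\ BDD^{d}\end{pmatrix}=0$ since $BD^{d}=0$, and $ca^{i}b=\begin{pmatrix}0&0\\ BD^{i+1}C&0\end{pmatrix}=0$ for every $i\ge0$ since $BD^{i+1}C=0$. The lemma gives $W^{d}=\begin{pmatrix}D^{d}+\Omega&\Psi\\ \Phi&\Delta^{d}\end{pmatrix}$, where $\Phi,\Psi,\Omega$ arise by substituting $a,b,c,d$ into the series $\psi,\phi,\varphi$ of Lemma~\ref{guoth3.2.2}; using $ca^{i}=\begin{pmatrix}0\\ BD^{i+1}\end{pmatrix}$, $D^{i}b=\begin{pmatrix}D^{i}C&0\end{pmatrix}$, and that $D^{\pi}$ commutes with $D$, these collapse term–for–term onto the $\Phi,\Psi,\Omega$ in the statement. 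As $W=HG$ is generalized Drazin invertible, Cline's formula gives $\widetilde M^{d}=(GH)^{d}=GW^{2d}H$, so $M^{d}=S\widetilde M^{d}S=(SG)\,W^{2d}\,(HS)$.

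Finally I would expand using $W^{2d}=W^{d}W^{d}$. Writing $U=\begin{pmatrix}A&I\\ C&0\end{pmatrix}$ and $V=\begin{pmatrix}I&0\\ 0&B\end{pmatrix}$, a direct computation gives $SG=\begin{pmatrix}\begin{pmatrix}0\\ D\end{pmatrix}&U\end{pmatrix}$ and $HS=\begin{pmatrix}\begin{pmatrix}0&I\end{pmatrix}\\ V\end{pmatrix}$. Then $(SG)W^{d}$ has block–columns $\begin{pmatrix}0\\ D^{e}+D\Omega\end{pmatrix}+U\Phi$ and $\begin{pmatrix}0\\ D\Psi\end{pmatrix}+U\Delta^{d}$ — the extra factor $D$, producing $D^{e}=DD^{d}$ and $D\Omega,\,D\Psi$, coming from the $D$ inside $\begin{pmatrix}0\\ D\end{pmatrix}$ — while $W^{d}(HS)$ has block–rows $\begin{pmatrix}0&D^{d}+\Omega\end{pmatrix}+\Psi V$ and $\begin{pmatrix}0&\Phi\end{pmatrix}+\Delta^{d}V$; multiplying $\big((SG)W^{d}\big)\big(W^{d}(HS)\big)$ reproduces the stated sum of two products. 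The steps are forced once the swap–and–factor idea is found; the genuine work, and the main obstacle, is the bookkeeping in the third paragraph — verifying that the generic double and single sums $\varphi,\phi,\psi$ collapse exactly onto $\Omega,\Psi,\Phi$, and that the two final matrix multiplications reproduce every entry, in particular the correct placement of the extra factor $D$.
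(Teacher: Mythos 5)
Your proof is correct, and it rests on the same pillars as the paper's — Cline's formula (Lemma \ref{cline}), Lemma \ref{guoth3.2.2} applied to a $3\times3$ block "reverse product" whose lower-right corner is $\Delta$, and Corollary \ref{corNdn} for $\Delta^{dn}$ — but it uses a genuinely different decomposition. The paper factors $M$ directly: $M=PQ$ with $P=\begin{pmatrix}0&A&I\\ I&C&0\end{pmatrix}$, $Q=\begin{pmatrix}0&D\\ I&0\\ 0&B\end{pmatrix}$, so its reverse product is $QP=\begin{pmatrix}D&DC&0\\ 0&A&I\\ B&BC&0\end{pmatrix}$, i.e.\ Lemma \ref{guoth3.2.2} is applied with $b=\begin{pmatrix}DC&0\end{pmatrix}$, $c=\begin{pmatrix}0\\ B\end{pmatrix}$; you instead conjugate by the swap and factor $SMS=GH$, so your reverse product is $HG=\begin{pmatrix}D&C&0\\ 0&A&I\\ BD&BC&0\end{pmatrix}$, with $b=\begin{pmatrix}C&0\end{pmatrix}$, $c=\begin{pmatrix}0\\ BD\end{pmatrix}$. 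The hypotheses of Lemma \ref{guoth3.2.2} hold equally in both settings (both reduce to $BD^d=0$ and $BD^{i+1}C=0$), and the $\Delta$-corner is identical, so both routes are valid; the difference is where the extra factor of $D$ sits. With your choice, the series $\psi,\phi,\varphi$ of Lemma \ref{guoth3.2.2} collapse \emph{verbatim} onto the theorem's $\Phi,\Psi,\Omega$ (e.g.\ $\psi=\sum_{i}\Delta^{d(i+2)}\begin{pmatrix}0\\ BD^{i+1}\end{pmatrix}=\Phi$), so the final step really is just the two block multiplications you display. With the paper's choice, the series come out shifted (e.g.\ $\sum_{i}\Delta^{d(i+2)}\begin{pmatrix}0\\ BD^{i}\end{pmatrix}$, and $D^{i+1}C$ in place of $D^{i}C$), and these shifts are only reabsorbed when multiplying by $P$ on the left and $Q$ on the right — that reconciliation is the content of the paper's ``routine computation.'' So your variant buys more transparent bookkeeping at the negligible cost of carrying the similarity $M^d=S\widetilde M^dS$, while the paper avoids any conjugation by building the permutation into $P$ and $Q$. (Both arguments share the same mild, standard abuse: Cline's formula is stated for elements of one Banach algebra but is applied to rectangular factors; this is fixed by the usual embedding into a larger operator matrix algebra and is not a gap peculiar to your write-up.)
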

\begin{proof}
 Note that
 \begin{equation}\label{equ=pq}
   M=\begin{pmatrix}
         0 & A & I \\
         I & C & 0
       \end{pmatrix}
     \begin{pmatrix}
         0 & D \\
         I & 0 \\
         0 & B
       \end{pmatrix}.
 \end{equation}
Let us denote by $P$ and $Q$ the left matrix and the right matrix of the right hand side in \eqref{equ=pq}, respectively.
Then $QP=\begin{pmatrix}
   \alpha&\beta\\ \gamma&\Delta
\end{pmatrix}$, where
\begin{equation}\label{eqd1}
\alpha=D,~~~ \beta=\begin{pmatrix}
                   DC & 0
                 \end{pmatrix},~~~
\gamma=\begin{pmatrix}
         0 \\
         B
       \end{pmatrix}~~~~\text{and}~~~~
       \Delta=\begin{pmatrix}
                A & I \\
                BC &0
              \end{pmatrix}.
\end{equation}
Applying Corollary \ref{corNdn} to $\Delta$
we obtain the expression of $\Delta^{dn}$ for any $n\geq1$ as shown in Theorem \ref{BDd=0,FEFpi=0}.
Since $BD^d=0$ and $BD^iC=0$ for $i=1,2,\ldots$, we have
\begin{equation}\label{eqd2}
\alpha^n\beta=\begin{pmatrix}
                    D^{n+1}C & 0
                  \end{pmatrix},~~~~
\alpha^{dn}\beta=\begin{pmatrix}
                    D^{d(n+1)}C & 0
                  \end{pmatrix}
~~~~\text{and}~~~~
\gamma\alpha^n=\begin{pmatrix}
                 0 \\ BD^n
               \end{pmatrix}.
\end{equation}
Moreover,
$
\gamma\alpha^d=0 ~~~\text{and}~~~ \gamma\alpha^i\beta=0$ for $i=0,1,2,\ldots$.
Substitute \eqref{eqd1}, \eqref{eqd2} and $\Delta^{dn}$  into Lemma \ref{guoth3.2.2} to obtain $(QP)^d$.
By Lemma~\ref{cline} $M^{d}= P(QP)^{2d}Q $ and a routine computation we get the expression of $M^d$ as shown in Theorem \ref{BDd=0,FEFpi=0}.
%\begin{equation*}
%\begin{array}{ll}
%M^{d}=&\begin{pmatrix}
%
%
%
%0 &A & I \\
%U &C & 0
%\end{pmatrix}\begin{pmatrix}
%   V(D^{2d}+\Omega) U & V\Psi\\
%    \Phi U& \Delta^d
%    \end{pmatrix}^2
%\begin{pmatrix}
%0 & V \\
%I & 0 \\
% 0 & B
%\end{pmatrix}.\\
%\end{array}
%\end{equation*}
\end{proof}

We now analyse some special cases of the preceding theorem, some of which give results of
\cite{Castro2009,Deng2010,Djordjev2001,guophd,Mosic2014}.

\begin{corollary}
Let $M=\begin{pmatrix}
         A & B \\
         C & D
       \end{pmatrix}$ be an operator matrix
with $A$ and $D$ generalized Drazin invertible. If $A(BC)^\pi=0$, $(BC)^\pi BCA=0$, $BD=0$, then
  $M$ is generalized Drazin invertible, and
\begin{align*}
                          M^{d}=
    \begin{pmatrix}
       0 &0\\
       0 &D^d
     \end{pmatrix}
 +
    \begin{pmatrix}
       0 \\
       DD^d
     \end{pmatrix}\Psi\begin{pmatrix}
                          I & 0 \\
                          0 & B
                        \end{pmatrix}
 +
    \begin{pmatrix}
       0 \\
       D\Psi
     \end{pmatrix}
     \Delta^d\begin{pmatrix}
                          I & 0 \\
                          0 & B
                        \end{pmatrix}
     +\begin{pmatrix}
                          A & I \\
                          C & 0
                        \end{pmatrix}\Delta^{2d}
    \begin{pmatrix}
                          I & 0 \\
                          0 & B
                        \end{pmatrix},
    \end{align*}
where
\begin{align*}
&\Psi=\sum_{k=0}^\infty\begin{pmatrix}
D^{k}D^\pi C & 0
\end{pmatrix}\Delta^{d(k+2)}
+\sum_{k=0}^\infty\begin{pmatrix}
D^{d(k+2)}C & 0
\end{pmatrix}\Delta^k\Delta^\pi-\begin{pmatrix}
D^dC & 0
\end{pmatrix}\Delta^d,\\
&\Delta^{dn}=
\sum\limits_{i=0}^\infty\begin{pmatrix}
                                         0 & (BC)^\pi A(BC)^d  \\
                                           0& 0
                                       \end{pmatrix}\begin{pmatrix}
                                                           0 & (BC)^d \\
                                                          I & -A(BC)^d
                                                         \end{pmatrix}^{2i+n}\\
&\qquad\qquad+\begin{pmatrix}
                 0 & (BC)^d \\
                 BC(BC)^d & -BC(BC)^dA(BC)^d
               \end{pmatrix}^n
\end{align*}
for $n\geq1$ and
$
\Delta=\begin{pmatrix}
                A & I \\
                BC &0
              \end{pmatrix}.
$
\end{corollary}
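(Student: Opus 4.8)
The plan is to obtain this corollary as a specialization of Theorem \ref{BDd=0,FEFpi=0}, so the first task is to check that the three hypotheses here are stronger than the four hypotheses there. Write $W=BC$. The condition $(BC)^\pi BCA=0$ is literally the second hypothesis of the theorem. Multiplying $A(BC)^\pi=0$ on the left by $(BC)^d$ gives $(BC)^dA(BC)^\pi=0$, the first hypothesis. Finally, from $BD=0$ and the identity $D^d=D\,D^{2d}$ one gets $BD^d=(BD)D^{2d}=0$, and $BD^iC=(BD)D^{i-1}C=0$ for every $i\ge 1$; these are the remaining two hypotheses. Hence Theorem \ref{BDd=0,FEFpi=0} applies, $M$ is generalized Drazin invertible, and $M^d$ is given by the displayed formula there; it remains only to simplify that formula under the present stronger assumptions.

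Second, I would record the simplifications forced by $BD=0$. Since $BD^{k+1}=(BD)D^{k}=0$ for all $k\ge 0$, every summand of $\Phi$ and of all three double sums defining $\Omega$ carries a factor $\begin{pmatrix}0\\ BD^{k+1}\end{pmatrix}$ and therefore vanishes; thus $\Phi=0$ and $\Omega=0$. Substituting $\Phi=\Omega=0$ into the theorem's expression for $M^d$ collapses it: of the four bracketed factors, the first reduces to $\begin{pmatrix}0\\ DD^d\end{pmatrix}$ and the last to $\Delta^d\begin{pmatrix}I&0\\0&B\end{pmatrix}$, while using $DD^d\,D^d=D^d$ the product $\begin{pmatrix}0\\ DD^d\end{pmatrix}\begin{pmatrix}0&D^d\end{pmatrix}$ becomes $\begin{pmatrix}0&0\\0&D^d\end{pmatrix}$. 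A short expansion then yields exactly the four-term expression for $M^d$ in the statement, with the same $\Psi$.

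Third comes the simplification of $\Delta^{dn}$, for which I would apply Corollary \ref{corNdn} to $\Delta=\begin{pmatrix}A&I\\ BC&0\end{pmatrix}$ (that is, with $E=A$, $F=BC$) and then feed in the extra strength $A(BC)^\pi=0$. The key auxiliary identities are $A^d(BC)^\pi=A^{2d}\bigl(A(BC)^\pi\bigr)=0$, hence $A^{dk}(BC)^\pi=0$ for $k\ge1$; $A^\pi(BC)^\pi=(1-AA^d)(BC)^\pi=(BC)^\pi$; and $A^k(BC)^\pi=A^{k-1}\bigl(A(BC)^\pi\bigr)=0$ for $k\ge1$. Likewise $(BC)^\pi BCA=0$ gives $BC(BC)^\pi A=0$ and $BC(BC)^\pi A^d=0$. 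These identities annihilate every top-left entry in the first sum of Corollary \ref{corNdn} and reduce its bottom-left entry to the single term $(BC)^\pi A(BC)^d$ at $i=0$.

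The main obstacle is precisely the reconciliation of these pieces with the stated series. After the substitutions the first sum contributes only the $i=0$ matrix $\begin{pmatrix}0 & (BC)^\pi A(BC)^d\\0&0\end{pmatrix}$, whereas the claimed answer carries the full series $\sum_{i\ge0}\begin{pmatrix}0 & (BC)^\pi A(BC)^d\\0&0\end{pmatrix}\begin{pmatrix}0&(BC)^d\\I&-A(BC)^d\end{pmatrix}^{2i+n}$, so the higher-order terms must be produced by recombining the surviving pieces of the $\sum_{j=1}^n$ block, which still carries nonzero factors $A^{d(\cdots)}(BC)^{i+1}(BC)^\pi$. Tracking this recombination termwise is the delicate bookkeeping. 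I expect the cleanest way around it is conceptual rather than computational: in the Pierce decomposition used to prove Theorem \ref{th220babpi=0} the relevant corner is $d=\begin{pmatrix}A(BC)^\pi & 0\\0&0\end{pmatrix}$, which under $A(BC)^\pi=0$ is simply $d=0$. Re-running that argument with $d=0$, so that $d^d=0$, $d^\pi=1$, and the $\tau,\psi$ sums of Lemma \ref{guo3.3.7cor} collapse, should produce the compact $\Delta^{dn}$ directly and sidestep the messy term-by-term matching.
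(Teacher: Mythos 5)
Your Steps 1--2 are correct: the hypotheses of Theorem \ref{BDd=0,FEFpi=0} do follow as you say, $BD=0$ forces $\Phi=\Omega=0$, and the four-term expression for $M^d$ drops out. The genuine gap is in your third step, and it is a misdiagnosis rather than ``delicate bookkeeping''. You claim the $\sum_{j=1}^n$ block of Corollary \ref{corNdn} ``still carries nonzero factors $A^{d(\cdots)}(BC)^{i+1}(BC)^\pi$'' which must somehow recombine into the higher terms of the printed series. In fact that block vanishes identically. From $A(BC)^\pi=0$ you already derived $A^d(BC)^\pi=0$, i.e. $A^d=A^d\,BC(BC)^d$; since $(BC)^d$ commutes with powers of $BC$ and $(BC)^d(BC)^\pi=0$, for every $k\ge0$
\begin{equation*}
A^d(BC)^{k}(BC)^\pi=A^d\,BC(BC)^d(BC)^{k}(BC)^\pi=A^d(BC)^{k+1}\bigl((BC)^d(BC)^\pi\bigr)=0,
\end{equation*}
which annihilates $A^{d(2i+j+2)}(BC)^{i+1}(BC)^\pi$, $A^{d(2i+j+1)}(BC)^{i}(BC)^\pi$ and $A^{dj}(BC)^\pi A(BC)^d$ alike. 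Hence Corollary \ref{corNdn} delivers exactly
\begin{equation*}
\Delta^{dn}=\begin{pmatrix}0&(BC)^\pi A(BC)^d\\0&0\end{pmatrix}\begin{pmatrix}0&(BC)^d\\I&-A(BC)^d\end{pmatrix}^{n}
+\begin{pmatrix}0&(BC)^d\\BC(BC)^d&-BC(BC)^dA(BC)^d\end{pmatrix}^{n},
\end{equation*}
i.e. only the $i=0$ term survives, and there is nothing left to recombine.

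The reconciliation you plan to carry out is not merely hard, it is impossible, because the series as printed in the corollary is itself erroneous: its $i\ge1$ terms are in general nonzero and the series need not converge. Take $A=\begin{pmatrix}1&0\\1&0\end{pmatrix}$, $B=\begin{pmatrix}1&0\\0&0\end{pmatrix}$, $C=I$, $D=0$, so $BC=\begin{pmatrix}1&0\\0&0\end{pmatrix}$ and all hypotheses hold; then $(BC)^\pi A(BC)^d=\begin{pmatrix}0&0\\1&0\end{pmatrix}\neq 0$, and with $K=\begin{pmatrix}0&(BC)^d\\I&-A(BC)^d\end{pmatrix}$ the nonzero row of $\begin{pmatrix}0&(BC)^\pi A(BC)^d\\0&0\end{pmatrix}K^{2i+1}$ is $(1,0,-1,0)$, $(2,0,-3,0)$, $(5,0,-8,0),\dots$, Fibonacci growth coming from the corner $\begin{pmatrix}0&1\\1&-1\end{pmatrix}$ of $K$ with spectral radius $(1+\sqrt5)/2>1$, so the displayed $\sum_{i=0}^\infty$ diverges. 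The correct statement keeps only the $i=0$ term, and one can check directly that the single-term formula does give $\Delta^d$ in this example. Your fallback idea is in fact the right way to see this: with the Pierce corner $d=\begin{pmatrix}A(BC)^\pi&0\\0&0\end{pmatrix}=0$ in the proof of Theorem \ref{th220babpi=0}, one gets $\Lambda=0$, $\Sigma_0=ca^{2d}$, hence $(\sigma(\Delta))^{dn}=\begin{pmatrix}a^{dn}&0\\ca^{d(n+1)}&0\end{pmatrix}$, and the identity $\begin{pmatrix}(BC)^\pi A\,BC(BC)^d&0\\0&0\end{pmatrix}K^{n+1}=\begin{pmatrix}0&(BC)^\pi A(BC)^d\\0&0\end{pmatrix}K^{n}$ turns this into the single-term formula above. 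So your route proves the corrected corollary; it cannot, and should not, reproduce the series as literally printed.
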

\begin{proof}
 The result can be deduced by routine computations.
\end{proof}

The corollary above relaxes and removes Theorem 2.3 of \cite{Mosic2014}, in which Mosi\'{c} consider the conditions
$BD=0$, $A(BC)^\pi=0$, $C(BC)^\pi=0$, and $(BC)^\pi B=0$.

In \cite{Castro2009,Deng2010,Djordjev2001,guophd},
expressions of the GD-inverse of $M$ are given under the following conditions
\begin{enumerate}
\item $BC =0, BD = 0$ and $DC = 0$ (see \cite{Djordjev2001});
\item $BC =0, BD = 0$ (see \cite{Deng2010});
\item $BCA=0,BD=0,DC=0$ (see \cite{Castro2009});
\item $BCA=0,BD=0,$ and $BC$ is nilpotent (see \cite{Castro2009});
\item $BD^d=0$, $BD^iC=0$ for $i=0,1,...,n-1$ (see \cite{guophd}).
\end{enumerate}
Theorem \ref{BDd=0,FEFpi=0} relaxes some conditions in each item of (1)-(5) and gives a unified generalization of
\cite[Theorem 5.3]{Djordjev2001}, \cite[Theorem 2]{Deng2010}, \cite[Theorem 4.4]{Castro2009}, \cite[Theorem 4.2]{Castro2009},
and \cite[Theorem 3.2.1]{guophd}

We conclude this paper with some remarks. Using a way similar to Theorem \ref{BDd=0,FEFpi=0} we can give an expression of the generalized Drazin inverse $M^d$ under the following condition
$$(BC)^\pi A(BC)^d=0,~~ABC(BC)^\pi=0,~~BD^d=0, ~~~BD^iC=0 ~~\forall i\geq 1,$$
 which gives a unified generalization of
\cite[Theorem 5.3]{Djordjev2001}, \cite[Theorem 2]{Deng2010}, \cite[Theorem 1]{CvetkovAS2011}, \cite[Theorem 3.2.5]{guophd} and \cite[Theorem 2.5]{Mosic2014}.

Moreover, by using \cite[Theorem 3.2.4]{guophd} instead of Lemma \ref{guoth3.2.2} and by using the similar argument  we can give expressions of the generalized Drazin inverse $M^d$ under the following conditions, respectively,
\begin{enumerate}
\item $(BC)^dA(BC)^\pi=0$, $(BC)^\pi BCA=0$, $D^dC=0$, and $BD^iC=0 ~~\forall i\geq 1$;\\
\item $(BC)^\pi A(BC)^d=0$, $ABC(BC)^\pi=0$, $D^dC=0$, and $BD^iC=0~~\forall i\geq 1$.
\end{enumerate}
%
%
%\begin{lemma} \cite[Theorem 3.2.4]{guophd}\label{guoth3.2.4}
%Let
%$x=\begin{pmatrix}
%      a & b \\
%      c & d
%    \end{pmatrix}\in\mathcal {B}_{2\times2}$
%with $a$ and $d$ generalized Drazin invertible. If $a^db=0,ca^ib=0$ for any nonnegative integer $i$, then
%  $x$ is generalized Drazin invertible, and
% $$x^d=\begin{pmatrix}
%         a^d+\overline{\varphi} & \overline{\psi} \\
%         \overline{\phi} & d^d
%       \end{pmatrix},
%  $$
%where
%\begin{equation*}
%\begin{array}{ll}
% \overline{\psi}=&\Sigma_{i=0}^\infty a^ibd^{d(i+2)},\\
%    \overline{\phi}=&d^\pi\Sigma_{i=0}^\infty d^ica^{d(i+2)}+\Sigma_{i=0}^\infty d^{d(i+2)}ca^ia^\pi-d^dca^d,\\
%    \overline{\varphi}=&\Sigma_{i=0}^\infty\Sigma_{j=0}^\infty a^ibd^{d(i+j+3)}ca^ja^\pi
%       +\Sigma_{i=0}^\infty\Sigma_{j=0}^ia^{i-j}bd^jca^{d(i+3)}\\
%       &-\Sigma_{i=0}^\infty\Sigma_{j=0}^\infty a^ibd^{d(i+2)}d^jca^{d(j+1)}.
%\end{array}
%\end{equation*}
%\end{lemma}
%The following items, which are all a dual version of Theorem \ref{BDd=0,FEFpi=0}, can be proved similarly:
%\begin{enumerate}
%\item $(BC)^eA(BC)^\pi=0,(BC)^\pi BCA=0,D^dC=0$, $BD^iC=0,i=1,2,\ldots$,\\
%\item $(BC)^\pi A(BC)^e=0,ABC(BC)^\pi=0,BD^d=0$, $BD^iC=0,i=1,2,\ldots$,\\
%\item $(BC)^\pi A(BC)^e=0,ABC(BC)^\pi=0,D^dC=0$, $BD^iC=0,i=1,2,\ldots$.
%\end{enumerate}
which give a unified generalization of
\cite[Theorem 5.3]{Djordjev2001}, \cite[Theorem 3]{Deng2010}, \cite[Theorem 4.4, Theorem 4.5]{Castro2009},
\cite[Theorem 3.2.3, Theorem 3.2.6]{guophd}, \cite[Theorem 2.4, Theorem 2.6]{Mosic2014}, and
\cite[Theorem 1--3]{CvetkovAS2011}.
%\cite[Theorem 5.3]{Djordjev2001}, \cite[Theorem 3]{Deng2010}, \cite[Theorem 4.4]{Castro2009}, \cite[Theorem 4.5]{Castro2009},
%\cite[Theorem 3.2.3]{guophd}, \cite[Theorem 2.4]{Mosic2014}, and
%\cite[Theorem 1--3]{CvetkovAS2011}, \cite[Theorem 3.2.6]{guophd}, \cite[Theorem 2.6]{Mosic2014}.

\bibliographystyle{amsplain}

\end{document}